\numberwithin{equation}{section}
\theoremstyle{plain}
\newtheorem{theorem}{Theorem}[section]
\newtheorem{corollary}[theorem]{Corollary}
\newtheorem{proposition}[theorem]{Proposition}
\newtheorem{lemma}[theorem]{Lemma}
\theoremstyle{remark}
\newtheorem{remark}[theorem]{Remark}
\newtheorem{example}[theorem]{Example}
\newtheorem*{ack}{Acknowledgement}
\theoremstyle{definition}
\newtheorem{definition}[theorem]{Definition}
\newcommand{\PA}{\mathcal{A}}
\newcommand{\PB}{\mathcal{B}}
\newcommand{\HH}{\mathcal{H}}
\newcommand{\LL}{\mathcal{L}}
\newcommand{\QQ}{\mathcal{Q}}
\newcommand{\R}{\mathbb{R}}
\newcommand{\N}{\mathbb{N}}
\newcommand{\iii}{\mathtt{i}}
\newcommand{\jjj}{\mathtt{j}}
\newcommand{\kkk}{\mathtt{k}}
\newcommand{\eps}{\varepsilon}
\newcommand{\fii}{\varphi}
\newcommand{\roo}{\varrho}
\newcommand{\ualpha}{\overline{\alpha}}
\newcommand{\lalpha}{\underline{\alpha}}
\newcommand{\id}{\textrm{Id}}
\newcommand{\intI}{\mathcal{I}}
\newcommand{\intJ}{\mathcal{J}}
\DeclareMathOperator{\dimm}{dim_M}
\DeclareMathOperator{\dimum}{\overline{dim}_M}
\DeclareMathOperator{\dimlm}{\underline{dim}_M}
\DeclareMathOperator{\dimh}{dim_H}
\DeclareMathOperator{\diam}{diam}
\DeclareMathOperator{\conv}{conv}
\begin{document}

\title{Overlapping self-affine sets of Kakeya type}

\author{ Antti K\"aenm\"aki      \and
         Pablo Shmerkin               }

\address{Department of Mathematics and Statistics \\
         P.O. Box 35 (MaD) \\
         FI-40014 University of Jyv\"askyl\"a \\
         Finland}

\email{antakae@maths.jyu.fi}
\email{shmerkin@maths.jyu.fi}

\thanks{AK acknowledges the support of the Academy of Finland (project
  \#114821). Research of PS was partially supported by NSF
  grant \#DMS-0355187 and the Academy of
  Finland. Part of this research was carried out while PS was visiting the Instituto de Matem\'{a}tica Pura e Aplicada (IMPA), Brazil.}
\subjclass[2000]{Primary 28A80; Secondary 37C45.} \keywords{Kakeya
set, self-affine set, Minkowski dimension}
\date{\today}

\begin{abstract}
  We compute the Minkowski dimension for a family of self-affine sets
  on $\R^2$. Our result holds for every (rather than generic) set in
  the class. Moreover, we exhibit explicit open subsets of this
  class where we allow overlapping, and do not impose any
  conditions on the norms of the linear maps. The family under consideration was inspired by the
  theory of Kakeya sets.
\end{abstract}

\maketitle

\section{Introduction}

An \emph{iterated function system (IFS)} on $\R^d$ is a finite
collection of strictly contractive self-maps $f_1,\ldots,f_\kappa$. A
classical result, formalized by Hutchinson \cite{Hutchinson1981}
(although the crucial idea goes back to Moran \cite{Moran1946}),
states that for every IFS there is a unique nonempty compact set $E
\subset \R^d$ for which
\begin{equation*}
  E = \bigcup_{i=1}^\kappa f_i(E).
\end{equation*}
When the mappings are similitudes (or conformal) and the pieces
$f_i(E)$ do not overlap much, the Hausdorff dimension of $E$ is
easily determined by the contraction ratios of the mappings $f_i$,
see for example \cite{Hutchinson1981}, \cite{MauldinUrbanski1996},
and \cite{KaenmakiVilppolainen2006}. In the present article, we
assume that the mappings $f_i$ are affine; in this case the set
$E$ is called a \emph{self-affine set}. In addition, we do not
require any non-overlapping condition. Dropping either the
conformality or separation hypothesis makes the problem of
estimating dimension dramatically more complicated. The main
feature of our work is that we are able to drop both, while
obtaining results which are valid everywhere, not just
generically.

The so-called singular value function plays a prominent r\^ole in
the study of the dimension of self-affine sets. Following
\cite[Proposition 4.1]{Falconer1988}, the singular value function
leads to a notion of the singular value dimension, which serves as
an upper bound for the upper Minkowski dimension, see
\cite{DouadyOesterle1980} and \cite{Falconer1988}.
Falconer \cite{Falconer1988} (see also \cite{Solomyak1998}) proved
that assuming the norms of the linear parts to be less than
$\tfrac12$, this upper bound is sharp, and also equals the
Hausdorff dimension, for $\LL^{d\kappa}$-almost every choice of
translation vectors.
Here $\LL^{d\kappa}$ denotes the Lebesgue measure on
$\R^{d\kappa}$. Falconer and Miao \cite{FalconerMiao2007a} have
recently shown that the size of the set of exceptional translation
vectors is small also in the sense of Hausdorff dimension. The
self-affine carpets of McMullen \cite{McMullen1984} show that one
cannot replace ``almost all'' by ``all'', even if the pieces do
not overlap. Furthermore, it follows from examples in
\cite{Edgar1992} that the $\tfrac12$ bound on the norms is
essential. These counterexamples are of a very special kind, and
it is therefore of interest to find families of self-affine sets
for which one can loose these assumptions.

A result into this direction was obtained by Hueter and Lalley in
\cite{HueterLalley1995}, where it is proven that for an explicit
open class of self-affine sets, the Hausdorff dimension is indeed
given by the singular value dimension, as long as the pieces
$f_i(E)$ are disjoint. In their result the norms may be greater
than $\tfrac12$, but it follows from their hypotheses that the
singular value dimension is less than $1$. In a different
direction, it was recently proven in
\cite{JordanPollicottSimon2006} that for a randomized version of
self-affine sets the natural analogue of Falconer's formula holds
almost surely regardless of the norms. See also \cite{Hu1998},
\cite{Kaenmaki2004}, \cite{FengWang2005}, \cite{Shmerkin2005}, and
\cite{FalconerMiao2007b} for other recent results on the
dimensional properties of self-affine sets.

For a fixed $\kappa$ and $d$, the class of all IFSs consisting of
$\kappa$ affine maps on $\R^d$ inherits a natural topology from
$\mathcal{A}_d^\kappa$, where $\mathcal{A}_d=GL_d(\R)\times \R^d$
is identified with the vector space of all invertible affine
mappings on $\R^d$. We will say that a family of affine IFS's is
\emph{robust} if it is open in this topology, and that a property
is \emph{stable} if the set of IFS's where it holds is robust.

We define a class of self-affine sets in which we allow
overlapping and the norms of all the maps can be arbitrarily close
to $1$; see \S \ref{sec:kakeya} for the details. We show that in
this class the Minkowski dimension coincides with the singular
value dimension (Theorem \ref{thm:main_result}), and it can be
defined dynamically as the zero of a certain pressure function.
Even though the family is not itself robust, in \S
\ref{sec:examples} we will exhibit robust subsets which preserve
all the interesting properties. This is the first instance where
the equality of Minkowski dimension and singular value dimension
is established for a robust family, without requiring any
separation assumptions. Moreover, we prove that the Minkowski
dimension is a continuous function of the generating maps within
this family.

The inspiration for our work arose from the theory of Kakeya sets.
Recall that a subset of $\R^d$ is called a \emph{Kakeya set}
(sometimes also a \emph{Besicovitch set}) if it contains a unit
segment in every direction. The long-standing Kakeya conjecture
asserts, in one of its many forms, that the Hausdorff dimension of
a Kakeya set in $\R^d$ is precisely $d$. This is wide open for
$d\ge 3$; however, for $d=2$ it is known to be true, and indeed
the proof is not difficult, see for example \cite{Wolff1999}. This
result implies that the overlap between segments pointing in
different directions is small, in the sense that the dimension of
the union of all segments is the same as if there was no overlap
at all. We strove to construct a family of self-affine sets in
which the cylinder sets are aligned in different directions, so
that the possible overlaps between them would not affect the
dimension calculations. Although the technical details may obscure
it somewhat, it may be useful to keep this basic idea in mind
while going through the definitions and proofs.

The paper is structured as follows. In \S \ref{sec:self-affine}, we
introduce some standard notation and present some preliminary
facts on self-affine sets. The family of self-affine sets of
Kakeya type is defined in \S \ref{sec:kakeya}, where Theorem
\ref{thm:main_result}, the main result of the paper, is stated.
The proof of Theorem \ref{thm:main_result} is contained in \S
\ref{sec:proof}. In \S \ref{sec:projection}, we study projections
of self-affine sets, as part of our preparation to obtain explicit
examples of self-affine sets of Kakeya-type. These examples are
introduced in \S \ref{sec:examples}, where we finish our
discussion with some remarks and open questions.

\section{Self-affine sets} \label{sec:self-affine}

Throughout the article, we use the following notation: Let $0 <
\ualpha < 1$ and $I = \{ 1,\ldots,\kappa \}$ with $\kappa \ge 2$. Put
$I^* = \bigcup_{n=1}^\infty I^n$ and $I^\infty =
I^\N$. For each $\iii \in I^*$, there is $n \in \N$ such that
$\iii = (i_1,\ldots,i_n) \in I^n$. We call this $n$ as the
\emph{length} of $\iii$ and we denote $|\iii|=n$. The length of
elements in $I^\infty$ is infinity. Moreover, if $\iii \in I^*$
and $\jjj \in I^* \cup I^\infty$ then with the notation $\iii\jjj$
we mean the element obtained by juxtaposing the terms of $\iii$
and $\jjj$. For $\iii \in I^*$, we define $[\iii] = \{ \iii\jjj :
\jjj \in I^\infty \}$ and we call the set $[\iii]$ a
\emph{cylinder
  set} of level $|\iii|$. If $\jjj \in I^* \cup I^\infty$ and $1 \le n
< |\jjj|$, we define $\jjj|_n$ to be the unique element $\iii \in
I^n$ for which $\jjj \in [\iii]$. We also denote $\iii^- =
\iii|_{|\iii|-1}$. With the notation $\iii \bot \jjj$, we mean
that the elements $\iii,\jjj \in I^*$ are \emph{incomparable},
that is, $[\iii] \cap [\jjj] = \emptyset$. We call a set $A
\subset I^*$ incomparable if all of its elements are mutually
incomparable. Finally, with the notation $\iii \land \jjj$, we
mean the common beginning of $\iii \in I^*$ and $\jjj \in I^*$,
that is, $\iii \land \jjj = \iii|_n = \jjj|_n$, where $n = \min\{
k-1 : \iii|_k \ne \jjj|_k \}$.

Defining
\begin{equation*}
  |\iii - \jjj| =
  \begin{cases}
    \ualpha^{|\iii \land \jjj|}, \quad &\iii \ne \jjj \\
    0, &\iii = \jjj
  \end{cases}
\end{equation*}
whenever $\iii,\jjj \in I^\infty$, the couple $(I^\infty,|\cdot|)$
is a compact metric space. We call $(I^\infty,|\cdot|)$ a
\emph{symbol
  space} and an element $\iii = (i_1,i_2,\ldots) \in I^\infty$ a
\emph{symbol}. If there is no danger of misunderstanding, we will
also call an element $\iii \in I^*$ a symbol. Define the
\emph{left shift} $\sigma \colon I^\infty \to I^\infty$ by setting
\begin{equation}
 \sigma(i_1,i_2,\ldots) = (i_2,i_3,\ldots).
\end{equation}
The notation $\sigma(i_1,\ldots,i_n)$ means the symbol
$(i_2,\ldots,i_n) \in I^{n-1}$. Observe that to be precise in our
definitions, we need to work with ``empty symbols'', that is,
symbols with zero length, which will be denoted by $\varnothing$.

The singular values $1 > ||A|| = \alpha_1(A) \ge \cdots \ge \alpha_d(A) > 0$
of a contractive invertible matrix $A \in \R^{d\times d}$ are the
square roots of the eigenvalues of the positive definite matrix
$A^*A$, where $A^*$ is the transpose of $A$. The normalized
eigenvectors of $A^*A$ are denoted by
$\theta_1(A),\ldots,\theta_d(A)$. These eigenvectors together with
singular values give geometric information about the matrix $A$. For
example, let $v$ be the unit
vector with direction equal to the major axis of the ellipse $A(B)$,
where $B$ is any ball. By definition, the direction of $v$ is the
image under $A$ of a vector which maximizes $|A x|$ over all $x$ in
the unit ball. But $\theta_1(A)$ is precisely such a vector since $|A
x|^2 = A^* A x \cdot x$. Thus, explicitly, $v =
A\bigl( \theta_1(A) \bigr)/\alpha_1(A)$. For more detailed
information, the reader is referred to \cite[\S V.1.3]{Temam1988}.

For a contractive invertible matrix $A \in \R^{d\times d}$,
we define the \emph{singular value function} to be
\begin{equation*}
  \fii^t(A) = \alpha_1(A) \cdots \alpha_l(A) \alpha_{l+1}(A)^{t-l},
\end{equation*}
where $0 \le t < d$ and $l$
is the integer part of $t$. For $t \ge d$, we put $\fii^t(A) =
\bigl(\alpha_1(A) \cdots \alpha_d(A)\bigr)^{t/d} = |\det(A)|^{t/d}$.

For each $i \in I$, fix a contractive invertible matrix $A_i \in
\R^{d\times d}$ such that $||A_i|| \le \ualpha < 1$. Clearly
the products $A_\iii = A_{i_1}\cdots A_{i_n}$ are also
contractive and invertible as $\iii \in I^n$ and $n \in
\N$. Denoting 
$\lalpha = \min_{i \in I} \alpha_d(A_i) > 0$, for each $t,\delta
\ge 0$ we have
\begin{equation} \label{eq:cylinder1}
  \fii^t(A_\iii)\lalpha^{\delta |\iii|} \le \fii^{t+\delta}(A_\iii)
  \le \fii^t(A_\iii)\ualpha^{\delta |\iii|}
\end{equation}
whenever $\iii \in I^*$. According to \cite[Corollary
V.1.1]{Temam1988} and \cite[Lemma 2.1]{Falconer1988}, the
following holds for all $t \ge 0$:
\begin{equation} \label{eq:cylinder2}
  \fii^t(A_{\iii\jjj}) \le \fii^t(A_\iii) \fii^t(A_\jjj)
\end{equation}
whenever $\iii,\jjj \in I^*$.

Given $t \ge 0$, we define the \emph{topological pressure} to be
\begin{equation} \label{eq:pressure}
  P(t) = \lim_{n \to \infty} \tfrac{1}{n} \log\sum_{\iii \in I^n}
  \fii^t(A_\iii).
\end{equation}
The limit above exists by the standard theory of subadditive sequences
since for each $t \ge 0$, using \eqref{eq:cylinder2},
\begin{equation*}
  \sum_{\iii \in I^{n+m}} \fii^t(A_\iii) \le \sum_{\iii \in I^{n+m}}
  \fii^t(A_{\iii|_n})\fii^t(A_{\sigma^n(\iii)}) = \sum_{\iii \in I^n}\fii^t(A_\iii)
  \sum_{\jjj \in I^m}\fii^t(A_\jjj)
\end{equation*}
whenever $n,m \in \N$.
Moreover, as a function, $P \colon [0,\infty) \to \R$ is continuous
and strictly decreasing with $\lim_{t \to \infty}P(t) = -\infty$:
For $t,\delta \ge 0$ and $n \in \N$, we have, using \eqref{eq:cylinder1},
\begin{equation*}
  \delta\log\lalpha + \tfrac{1}{n} \log\sum_{\iii \in I^n} \fii^t(A_\iii)
  \le \tfrac{1}{n} \log\sum_{\iii \in I^n} \fii^{t+\delta}(A_\iii)
  \le \delta\log\ualpha + \tfrac{1}{n} \log\sum_{\iii \in I^n}
  \fii^t(A_\iii).
\end{equation*}
Letting $n \to \infty$, we get $0 < -\delta\log\ualpha \le P(t) -
P(t+\delta) \le -\delta\log\lalpha$. Since $P(0) = \log\kappa$, we
have actually shown that there exists a unique $t > 0$ for which $P(t)=0$.
The \emph{singular value dimension} is defined to be the zero of the
topological pressure. See also \cite[Proposition 4.1]{Falconer1988}.

\begin{theorem} \label{thm:semiconformal}
  Suppose that for each $i \in I$ there is an invertible
  matrix $A_i \in \R^{d\times d}$ with $||A_i|| \le \ualpha$. If for
  given $t \ge 0$ there exists a constant $D \ge 1$ such that
  \begin{equation*}
    D^{-1}\fii^t(A_\iii)\fii^t(A_\jjj) \le \fii^t(A_{\iii\jjj})
  \end{equation*}
  whenever $\iii,\jjj \in I^*$ then there exists a
  Borel probability measure $\mu$ on $I^\infty$, a constant $c \ge
  1$, and $1 > \lambda_1(\mu) \ge \cdots \ge \lambda_d(\mu) > 0$
  such that
  \begin{equation}
    \label{eq:semiconformal}
    c^{-1} e^{-|\iii|P(t)}\fii^t(A_\iii) \le \mu([\iii]) \le
    ce^{-|\iii|P(t)}\fii^t(A_\iii)
  \end{equation}
  whenever $\iii \in I^*$ and
  \begin{equation*}
    \lim_{n \to \infty} \alpha_k(A_{\iii|_n})^{1/n} = \lambda_k(\mu)
  \end{equation*}
  for $\mu$-almost all $\iii \in I^\infty$ and for every $k \in
  \{ 1,\ldots,d \}$.
\end{theorem}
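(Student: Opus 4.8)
The plan is to separate the argument into two stages: constructing a measure $\mu$ with the Gibbs-type property \eqref{eq:semiconformal}, and then reading off the numbers $\lambda_k(\mu)$ from the asymptotics of the singular values along $\mu$-typical symbols.

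First I would control the partition sums $Z_n=\sum_{\iii\in I^n}\fii^t(A_\iii)$. The hypothesis yields $Z_{n+m}\ge D^{-1}Z_nZ_m$ and \eqref{eq:cylinder2} yields $Z_{n+m}\le Z_nZ_m$; combining the resulting super- and subadditivity of $\log Z_n$ with the definition \eqref{eq:pressure} of the pressure gives
\[
  e^{nP(t)}\le Z_n\le D\,e^{nP(t)}\qquad(n\in\N).
\]
To build $\mu$, I would fix $\jjj_0\in I^\infty$ and set $\mu_n=Z_n^{-1}\sum_{\iii\in I^n}\fii^t(A_\iii)\,\delta_{\iii\jjj_0}$, a Borel probability measure on the compact space $I^\infty$. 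For a fixed cylinder $[\jjj]$ with $|\jjj|=m$ and every $n\ge m$, the hypothesis and \eqref{eq:cylinder2} bound $\mu_n([\jjj])$ between $D^{-1}\fii^t(A_\jjj)\,Z_{n-m}/Z_n$ and $\fii^t(A_\jjj)\,Z_{n-m}/Z_n$; inserting the estimate for $Z_n$ makes these bounds comparable to $e^{-mP(t)}\fii^t(A_\jjj)$, uniformly in $n$. As cylinders are clopen, any weak-$*$ limit $\mu$ of a subsequence of $(\mu_n)$ inherits these bounds and satisfies \eqref{eq:semiconformal} with $c=D^2$.

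Next I would upgrade $\mu$ to a shift-invariant measure without losing \eqref{eq:semiconformal}. Using \eqref{eq:semiconformal} together with the two-sided quasi-multiplicativity of $\fii^t$, one checks that $\sigma^n_*\mu([\jjj])$ is comparable to $e^{-mP(t)}\fii^t(A_\jjj)$ with constants independent of $n$; hence the same holds for the Cesàro averages $\tfrac1N\sum_{n=0}^{N-1}\sigma^n_*\mu$, and any weak-$*$ limit of these is $\sigma$-invariant and still satisfies \eqref{eq:semiconformal}. From now on $\mu$ denotes such a measure. Feeding \eqref{eq:semiconformal} back into the hypothesis and \eqref{eq:cylinder2} once more produces the quasi-Bernoulli property: there is $C\ge1$ with
\[
  C^{-1}\mu([\iii])\mu([\jjj])\le\mu([\iii\jjj])\le C\,\mu([\iii])\mu([\jjj])\qquad(\iii,\jjj\in I^*).
\]

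I expect the ergodicity of $\mu$ to be the delicate point, and I would derive it from the lower quasi-Bernoulli bound. Let $A$ be a $\sigma$-invariant Borel set and $[\iii]$ a cylinder of length $n$; since $\sigma^{-n}A=A$ up to a $\mu$-null set, the finite measure $\nu(B)=\mu([\iii]\cap\sigma^{-n}B)$ satisfies $\nu([\jjj])=\mu([\iii\jjj])\ge C^{-1}\mu([\iii])\mu([\jjj])$ on every cylinder. Because finite disjoint unions of cylinders form an algebra generating the Borel sets and are dense for the total variation of $\nu-C^{-1}\mu([\iii])\mu$, this signed measure is nonnegative, so $\mu([\iii]\cap A)=\nu(A)\ge C^{-1}\mu([\iii])\mu(A)$. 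Thus the conditional expectations with respect to the $\sigma$-algebra $\mathcal{F}_n$ generated by the level-$n$ cylinders satisfy $E[\mathbf{1}_A\mid\mathcal{F}_n]\ge C^{-1}\mu(A)$ $\mu$-almost everywhere; letting $n\to\infty$ in the martingale convergence theorem gives $\mathbf{1}_A\ge C^{-1}\mu(A)$ $\mu$-almost everywhere, forcing $\mu(A)\in\{0,1\}$.

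Finally I would extract the exponents. For $k\in\{1,\dots,d\}$ put $g^{(k)}_n(\iii)=\log\|\wedge^kA_{\iii|_n}\|=\sum_{j=1}^{k}\log\alpha_j(A_{\iii|_n})$. Since $\wedge^k$ is multiplicative and the operator norm submultiplicative, the factorization $A_{\iii|_{n+m}}=A_{\iii|_n}A_{(\sigma^n\iii)|_m}$ shows that $(g^{(k)}_n)_n$ is a subadditive cocycle over $\sigma$, and the bounds $\lalpha^n\le\alpha_j(A_{\iii|_n})\le\ualpha^n$ make it bounded, hence integrable. Kingman's subadditive ergodic theorem and the ergodicity of $\mu$ then give constants $\Lambda_k$ with $\tfrac1n g^{(k)}_n\to\Lambda_k$ $\mu$-almost everywhere; setting $\lambda_k(\mu)=\exp(\Lambda_k-\Lambda_{k-1})$ with $\Lambda_0=0$ yields $\alpha_k(A_{\iii|_n})^{1/n}\to\lambda_k(\mu)$ for $\mu$-almost every $\iii$. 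The ordering $\alpha_1\ge\cdots\ge\alpha_d$ forces $\lambda_1(\mu)\ge\cdots\ge\lambda_d(\mu)$, and the bounds $\lalpha\le\alpha_j(A_{\iii|_n})^{1/n}\le\ualpha$ give $0<\lalpha\le\lambda_d(\mu)$ and $\lambda_1(\mu)\le\ualpha<1$, as required.
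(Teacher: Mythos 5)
Your proposal is correct, and its endgame coincides with the paper's: apply Kingman's subadditive ergodic theorem to $\log\fii^k(A_{\iii|_n})=\sum_{j=1}^{k}\log\alpha_j(A_{\iii|_n})$ and set $\lambda_k(\mu)=\exp(\Lambda_k-\Lambda_{k-1})$. The difference lies in how the Gibbs measure is obtained. The paper delegates the existence of a measure satisfying \eqref{eq:semiconformal} entirely to \cite[Theorem 2.2]{KaenmakiVilppolainen2006}, remarking only that the argument there survives when $t$ is not an exponent; you instead give a self-contained construction: the two-sided bound $e^{nP(t)}\le Z_n\le D e^{nP(t)}$ from Fekete's lemma applied in both directions, weak-$*$ limits of the discrete approximations $\mu_n$ (valid because cylinders are clopen), Ces\`aro averaging to restore shift-invariance, and a martingale argument deriving ergodicity from the lower quasi-Bernoulli inequality. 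This buys transparency and makes explicit two ingredients the paper's citation silently supplies but which are genuinely needed for the second assertion of the theorem — $\sigma$-invariance (without which Kingman does not apply) and ergodicity (without which the limits $\lambda_k(\mu)$ need not be constant $\mu$-almost everywhere). The cost is length; the paper's version is a few lines. All the individual steps of your construction check out, including the constant $c=D^2$ before averaging and the extension of the inequality $\nu\ge C^{-1}\mu([\iii])\mu$ from the generating algebra of cylinder unions to all Borel sets.
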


\begin{proof}
  Using the assumptions, \eqref{eq:cylinder1},
  and \eqref{eq:cylinder2}, the existence of a 
  Borel probability measure $\mu$ satisfying \eqref{eq:semiconformal}
  follows from \cite[Theorem 2.2]{KaenmakiVilppolainen2006} by a minor
  modification. More precisely, in \cite{KaenmakiVilppolainen2006} it
  was assumed that the parameter $t$ is an exponent, but an
  examination of the proof reveals that this fact is not
  required. Using \cite[Theorem 2.2]{KaenmakiVilppolainen2006},
  \eqref{eq:cylinder2},
  and Kingman's subadditive ergodic theorem \cite{Steele1989},
  the limit
  \begin{equation*}
    E^t(\mu) = \lim_{n \to \infty} \tfrac1n \log\fii^t(A_{\iii|_n})
  \end{equation*}
  exists for $\mu$-almost every $\iii \in I^*$ and for every $t \ge 0$.
  Setting now $\lambda_k(\mu) = \exp\bigl( E^k(\mu)-E^{k-1}(\mu) \bigr)$
  for $k \in \{ 1,\ldots,d \}$, we have finished the proof.
\end{proof}

It may appear that the assumption of Theorem
\ref{thm:semiconformal} is very strong. However, it is implied by
some simple geometrical conditions; see Remark
\ref{P-semiconformal}. Observe also that even if the measure
satisfying \eqref{eq:semiconformal} did not exist, the latter
claim of Theorem \ref{thm:semiconformal} remains true for the
natural measure found in \cite[Theorem 4.1]{Kaenmaki2004}.

If for each $i \in I$ an invertible matrix $A_i \in \R^{d \times d}$
with $||A_i|| \le \ualpha$ and a translation vector $a_i$ are fixed
then we define a \emph{projection mapping} $\pi \colon I^\infty \to \R^d$
by setting
\begin{equation*}
  \pi(\iii) = \sum_{n=1}^\infty A_{\iii|_{n-1}}a_{i_n}
\end{equation*}
as $\iii = (i_1,i_2,\ldots)$. Using the triangle inequality, we have
\begin{align*}
  |\pi(\iii) - \pi(\jjj)| &\le \sum_{n=|\iii \land \jjj|+1}^\infty
  |A_{\iii|_{n-1}}a_{i_n} - A_{\jjj|_{n-1}}a_{j_n}| \\
  &\le \sum_{n=|\iii \land \jjj|+1}^\infty
  2\ualpha^{n-1} \max_{i \in I} |a_i|
  = \frac{2\max_{i \in I}|a_i|}{1-\ualpha}|\iii - \jjj|
\end{align*}
for every $\iii,\jjj \in I^\infty$. The mapping $\pi$ is therefore
continuous.

We define $E = \pi(I^\infty)$ and call this set a
\emph{self-affine set}. Observe that the compact set $E$ is
invariant under the affine mappings $A_i + a_i$, that is,
\begin{equation} \label{eq:invariance}
  E = \bigcup_{i=1}^\kappa (A_i+a_i)(E).
\end{equation}
This is an immediate consequence of the fact that
\begin{equation*}
  \pi(i\iii) = (A_i + a_i)\sum_{n=1}^\infty A_{\iii|_{n-1}}a_{i_n}
  = (A_i + a_i)\pi(\iii)
\end{equation*}
whenever $\iii \in I^\infty$ and $i \in I$. In fact, by \cite[\S
3.1]{Hutchinson1981}, there are no other nonempty compact sets
satisfying \eqref{eq:invariance} besides $E$. If there is no
danger of misunderstanding, the image of a cylinder set
\[
\pi([\iii]) = (A_{i_1} + a_{i_1}) \cdots (A_{i_n} + a_{i_n})(E) =
A_\iii(E) + A_{\iii|_{n-1}}a_{i_n} + \cdots + a_{i_1},
\]
as $\iii = (i_1,\ldots,i_n) \in I^n$, will also be called a
cylinder set, and we will denote $E_\iii = \pi([\iii])$. When we
want to emphasize the dependence of $E$ on the affine mappings, we
will say that $E$ is the \emph{invariant set} of the affine IFS
$\{ A_i + a_i\}_{i\in I}$.

\section{Self-affine sets of Kakeya type} \label{sec:kakeya}

In this section, we introduce self-affine sets of Kakeya type.
Working in $\R^2$, we state that the Minkowski dimension of such a
set is the zero of the topological pressure, see
\eqref{eq:pressure}. Given a set $A \subset \R^d$, the upper and
lower Minkowski dimensions are denoted by $\dimum(A)$ and
$\dimlm(A)$, respectively. For the definition, see \cite[\S
5.3]{Mattila1995}. If $\dimum(A) = \dimlm(A)$, then the common
value, the Minkowski dimension, is denoted by $\dimm(A)$. For $\theta
\in S^{d-1} = \{ x \in \R^d : |x| = 1 \}$ and $0 \le \beta \le \pi$,
we set
\begin{equation*} 
  X(\theta,\beta) = \{ x \in \R^d : \cos(\beta/2) < |\theta \cdot x| /
  |x|,\; x \ne 0 \}.
\end{equation*}
The closure of a given set $A$ is denoted by $\overline{A}$ and with
the notation $\LL^d$, we mean the Lebesgue measure on $\R^d$.

\begin{definition} \label{def:kakeya}
  Suppose that for each $i \in I$ there are a contractive invertible
  matrix $A_i \in \R^{2 \times 2}$ with $||A_i|| \le \ualpha<1$ and a
  translation vector $a_i \in \R^2$. The collection of affine mappings
  $\{ A_i + a_i \}_{i \in I}$ is called an \emph{affine iterated
    function system of Kakeya type}, and the invariant set $E \subset
  \R^2$ of this affine IFS a \emph{self-affine set of Kakeya type},
  provided that the following two conditions hold:
  \begin{enumerate}
  \renewcommand{\labelenumi}{(K\arabic{enumi})}
  \renewcommand{\theenumi}{K\arabic{enumi}}
    \item \label{K-kakeya}
      There exist $\theta \in S^1$ and $0 < \beta < \pi/2$ such
      that
      \begin{subequations}
        \renewcommand{\theequation}{\ref{K-kakeya}\alph{equation}}
        \begin{align}
          A_i\bigl( \overline{X(\theta,\beta)} \bigr) &\subset
          X(\theta,\beta),  \label{K-coneinvariance} \\
          A_i^*\bigl( \overline{X(\theta,\beta)} \bigr) &\subset
          X(\theta,\beta) \label{K-transpose}
        \end{align}
        whenever $i \in I$ and
        \begin{equation} \label{K-separation}
          A_i\bigl( \overline{X(\theta,\beta)}\bigr) \cap A_j\bigl(
          \overline{X(\theta,\beta)} \bigr) = \{0\}
        \end{equation}
        for $i \ne j$.
      \end{subequations}
    \item \label{K-projection}
      There exists a constant $\roo>0$ such that
      \begin{equation*}
        \LL^1 \bigl( \{ \theta_1(A_\iii) \cdot x : x \in E \} \bigr)
        \ge \roo
      \end{equation*}
      for all $\iii \in I^*$.
  \end{enumerate}
\end{definition}

Let us make some remarks on these conditions. Our goal is to make
the self-affine set look, at a given finite scale, roughly like a
rescaled Kakeya set (except that instead of having segments in
every direction, there are segments only in a Cantor set of
directions). The r\^{o}le of the conditions
\eqref{K-coneinvariance} and \eqref{K-separation} is to ensure
that cylinder sets are aligned in different directions. Notice the
analogy between these conditions and the Hypothesis 3
(``separation'') in \cite{HueterLalley1995}. The hypothesis
\eqref{K-transpose} is of technical nature. We underline that
\eqref{K-coneinvariance}, \eqref{K-transpose}, and
\eqref{K-separation} are all stable properties.

The projection condition \eqref{K-projection} is needed so that
cylinder sets do not have too many ``holes'' and one can
approximate them by neighborhoods of segments. It is the only one
of the assumptions which involves the translation vectors
$\{a_i\}_{i\in I}$ in addition to the linear maps $\{A_i\}_{i\in
I}$. In particular, \eqref{K-projection} implies that the
Hausdorff dimension of $E$ is at least one. Hence if $t$ is such
that $P(t)=0$, then $t\ge 1$ by \cite[Proposition
5.1]{Falconer1988}.
An analogous, but stronger, projection condition was introduced by
Falconer in \cite{Falconer1992}. We remark that in that article,
unlike in our case, the open set condition is also required. The
projection condition is obviously satisfied if the invariant set
is connected. Unfortunately, determining when a self-affine set is
connected in a stable way is a very difficult problem, even when
the linear parts commute, see for example
\cite{ShmerkinSolomyak2006}. In \S \ref{sec:projection}, we introduce
easily checkable, stable conditions which imply the projection
condition.

We do not need analogues of either Hypothesis 2 (``distortion'')
or Hypothesis 5 (``strong separation'') used in
\cite{HueterLalley1995}. In that article, Hypothesis 2 plays a
crucial r\^{o}le in guaranteeing that the invariant set has
dimension less than 1. By our observation that $t\ge 1$, it cannot
possibly hold in our setting. In a sense, our examples are more
purely self-affine, since both singular values are involved in the
dimension calculations, while in \cite{HueterLalley1995} the
dimension depends only on the largest one. We stress that our
results are only for the Minkowski dimension; estimating the Hausdorff
dimension in our setting appears to be a very difficult problem.

Before stating our main result, we formulate and prove a
Kakeya-type estimate which is a crucial ingredient of the proof.
Even though it is a minor variant of \cite[Proposition
1.5]{Wolff1999}, complete details are provided for the convenience
of the reader.

\begin{proposition} \label{thm:kakeya_estimate}
  Let $R_1,\ldots,R_M \subset \R^2$ be rectangles of size
  $\alpha_1 \times \alpha_2$, with $\alpha_1 > \alpha_2$. Suppose that
  the angle between the long sides of any two rectangles is at least
  $\alpha_2/\alpha_1$. If $F \subset \R^2$ and $\tau > 0$ are such
  that $\LL^2(F \cap R_i) \ge \tau\alpha_1\alpha_2$ for every $i \in
  \{ 1,\ldots,M \}$, then
  \begin{equation*}
    \LL^2(F) \ge
    \frac{M\tau^2 \alpha_1\alpha_2}{2\sqrt{2}\pi \log(2\pi\alpha_1/\alpha_2)}.
  \end{equation*}
\end{proposition}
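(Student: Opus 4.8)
The plan is to prove this Kakeya-type bound via a second-moment (Cauchy--Schwarz) argument applied to the overlap function, exactly in the spirit of the standard two-dimensional Kakeya estimate. The key geometric input is the angular separation hypothesis: since the long sides of any two rectangles differ in direction by at least $\alpha_2/\alpha_1$, the intersection of two such rectangles is controlled. Specifically, two rectangles of size $\alpha_1 \times \alpha_2$ whose long axes make an angle $\vartheta \ge \alpha_2/\alpha_1$ can meet only in a region of area at most on the order of $\alpha_2^2/\vartheta$, because the intersection is a parallelogram whose sides have length $\alpha_2$ and $\alpha_2/\sin\vartheta$.

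First I would set $F' = \bigcup_i (F \cap R_i) \subset F$, so that it suffices to bound $\LL^2(F')$ from below, and introduce the multiplicity function $g = \sum_{i=1}^M \chi_{F \cap R_i}$, supported on $F'$. By hypothesis $\int g \, d\LL^2 = \sum_i \LL^2(F \cap R_i) \ge M\tau\alpha_1\alpha_2$. By the Cauchy--Schwarz inequality,
\begin{equation*}
  \Bigl( \int g \, d\LL^2 \Bigr)^2 \le \LL^2(F') \int g^2 \, d\LL^2,
\end{equation*}
so the problem reduces to an upper bound on $\int g^2 \, d\LL^2 = \sum_{i,j} \LL^2(F \cap R_i \cap R_j)$. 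The diagonal terms contribute at most $\sum_i \LL^2(R_i) = M\alpha_1\alpha_2$, which (up to the logarithm) is harmless. The heart of the matter is the off-diagonal sum $\sum_{i \ne j} \LL^2(R_i \cap R_j)$.

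The main obstacle is organizing and summing the pairwise intersection bounds $\LL^2(R_i \cap R_j) \lesssim \alpha_2^2 / \sin\vartheta_{ij}$ across all pairs. The standard device is to dyadically decompose the angular gaps: for a fixed $i$, group the indices $j$ according to the dyadic range of $\vartheta_{ij}$, say $\vartheta_{ij} \in [2^k \alpha_2/\alpha_1, 2^{k+1}\alpha_2/\alpha_1)$. The separation hypothesis forces the number of rectangles whose long axis lies in any fixed angular window of width $\vartheta$ to be at most $O(\vartheta \alpha_1/\alpha_2)$, since consecutive directions are spaced by at least $\alpha_2/\alpha_1$; and angles range over an interval of length $\pi$, which is what produces the $\log(\alpha_1/\alpha_2)$ factor after summing the $O(\log(\alpha_1/\alpha_2))$ dyadic scales. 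Carrying out this bookkeeping carefully—tracking the constants $2\sqrt 2\,\pi$ and the precise argument $2\pi\alpha_1/\alpha_2$ of the logarithm—is the delicate computational step; I expect this to require care with the geometry of the parallelogram intersection and with the counting of rectangles per dyadic angular band.

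Putting the pieces together, I would obtain $\int g^2 \, d\LL^2 \lesssim M\alpha_1\alpha_2 \log(\alpha_1/\alpha_2)$, and then feed this into the Cauchy--Schwarz inequality to conclude
\begin{equation*}
  \LL^2(F') \ge \frac{(M\tau\alpha_1\alpha_2)^2}{\int g^2 \, d\LL^2} \ge \frac{M\tau^2\alpha_1\alpha_2}{2\sqrt 2\,\pi \log(2\pi\alpha_1/\alpha_2)},
\end{equation*}
which gives the stated estimate since $F' \subset F$. The logarithmic loss is intrinsic to this bush/second-moment approach and matches the known two-dimensional Kakeya bounds; the factor $\tau^2$ (rather than $\tau$) is the familiar price paid for using Cauchy--Schwarz rather than a more refined argument.
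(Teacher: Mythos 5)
Your proposal is correct and follows essentially the same route as the paper: the pairwise bound $\LL^2(R_i\cap R_j)\lesssim \alpha_2^2/(\alpha_2/\alpha_1+\varangle(R_i,R_j))$, the observation that the angular separation allows at most a bounded number of rectangles per angular slot of width $\alpha_2/\alpha_1$, and Cauchy--Schwarz applied to $\sum_i\chi_{F\cap R_i}$. The only cosmetic difference is that the paper sums the off-diagonal terms directly as a harmonic series $\sum_j 1/(\alpha_2/\alpha_1+j\alpha_2/\alpha_1)$ rather than via your dyadic decomposition of the angular gaps; both yield the same $\log(\alpha_1/\alpha_2)$ factor.
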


\begin{proof}
  Given two rectangles $R_i$ and $R_j$, let us denote the (smaller)
  angle between their long sides by $\varangle(R_i,R_j)$. Since
  $\alpha_2/\alpha_1 \le \varangle(R_i,R_j) \le \pi/2$, a simple geometric
  inspection yields $\alpha_2/\alpha_1 + \varangle(R_i,R_j) \le
  2\varangle(R_i,R_j) \le \sqrt{2}\pi\sin\bigl( \varangle(R_i,R_j)/2
  \bigr) = \sqrt{2}\pi\alpha_2/\diam(R_i \cap R_j)$ and hence
  \begin{equation*}
    \LL^2(R_i \cap R_j) \le \alpha_2\diam(R_i \cap R_j) \le
    \frac{\sqrt{2}\pi\alpha_2^2}{\alpha_2/\alpha_1 + \varangle(R_i,R_j)}
  \end{equation*}
  whenever $i \ne j$.
  Thus we have
  \begin{equation} \label{eq:L2_estimate}
    \sum_{j=1}^M \LL^2(R_i \cap R_j) \le \sum_{j=0}^{\lceil
      \frac{\pi\alpha_1}{2\alpha_2} \rceil}
    \frac{\sqrt{2}\pi\alpha_2^2}{\alpha_2/\alpha_1 +
      j\alpha_2/\alpha_1} \le
    2\sqrt{2}\pi\alpha_1\alpha_2\log(2\pi\alpha_1/\alpha_2)
  \end{equation}
  whenever $i \in \{ 1,\ldots,M \}$. Here with the notation $\lceil x
  \rceil$, we mean the smallest integer greater than $x$.
  Since, by using H\"older's inequality,
  \begin{align*}
    (M\tau\alpha_1\alpha_2)^2 &\le \biggl(\sum_{i=1}^M \LL^2(F \cap
      R_i)\biggr)^2 = \biggl(\int_{\R^2} \chi_F \sum_{i=1}^M
      \chi_{R_i} d\LL^2\biggr)^2 \\
    &\le \biggl( \int_{\R^2} \chi_F^2 d\LL^2 \biggr) \biggl(
    \int_{\R^2} \biggl( \sum_{i=1}^M \chi_{R_i} \biggr)^2 d\LL^2
    \biggr) \\ &= \LL^2(F) \sum_{i=1}^M \sum_{j=1}^M \LL^2(R_i \cap R_j), 
  \end{align*}
  the claim follows by applying \eqref{eq:L2_estimate}. Here $\chi_A$
  denotes the characteristic function of a given set $A$.
\end{proof}


We can now state the main result of this article.

\begin{theorem} \label{thm:main_result}
  Suppose $E \subset \R^2$ is a self-affine set of Kakeya type and
  $P(t)=0$. Then
  \begin{equation*}
    \dimm(E) = t \ge 1.
  \end{equation*}
  In particular, $\dimm$ is a continuous function when restricted
  to the class of affine IFS's of Kakeya-type.
\end{theorem}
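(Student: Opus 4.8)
The plan is to prove the two bounds $\dimum(E)\le t$ and $\dimlm(E)\ge t$ separately; since $\dimlm(E)\le\dimum(E)$ always holds, together they give $\dimm(E)=t$, and $t\ge1$ was already observed after Definition~\ref{def:kakeya}. Since $t\ge1$ and $E\subset\R^2$, we work in the range $1\le t\le2$, where $\fii^t(A_\iii)=\alpha_1(A_\iii)\,\alpha_2(A_\iii)^{t-1}$. Write $E(\delta)$ for the $\delta$-neighborhood of $E$ and, for small $\delta>0$, introduce the incomparable stopping set
\[
  \Xi_\delta=\{\iii\in I^*:\alpha_2(A_\iii)\le\delta<\alpha_2(A_{\iii^-})\}.
\]
Using $\lalpha\,\alpha_2(A_{\iii^-})\le\alpha_2(A_\iii)\le\ualpha\,\alpha_2(A_{\iii^-})$ one checks that $\Xi_\delta$ is a well-defined incomparable covering of $I^\infty$ with $\lalpha\delta<\alpha_2(A_\iii)\le\delta$ on it. The cone conditions \eqref{K-coneinvariance}--\eqref{K-transpose} supply the quasimultiplicativity hypothesis of Theorem~\ref{thm:semiconformal} (see Remark~\ref{P-semiconformal}); applying it with $P(t)=0$ yields a measure $\mu$ with $\mu([\iii])\approx\fii^t(A_\iii)$, and summing over the covering $\Xi_\delta$ gives the basic normalization $\sum_{\iii\in\Xi_\delta}\fii^t(A_\iii)\approx1$, uniformly in $\delta$.

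For the upper bound I would cover $E$ cylinder by cylinder. Each $E_\iii$ lies in a rectangle of dimensions $\approx\alpha_1(A_\iii)\times\alpha_2(A_\iii)$ with short side $\le\delta$, so it is covered by $\lesssim\alpha_1(A_\iii)/\delta+1$ squares of side $\delta$. Since $\alpha_2(A_\iii)\approx\delta$ gives $\alpha_1(A_\iii)/\delta\approx\fii^t(A_\iii)\delta^{-t}$ and $1\lesssim\fii^t(A_\iii)\delta^{-t}$, summing over $\Xi_\delta$ yields the covering bound $\lesssim\delta^{-t}\sum_{\iii\in\Xi_\delta}\fii^t(A_\iii)\approx\delta^{-t}$, whence $\dimum(E)\le t$.

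The lower bound is the heart of the matter and is where Proposition~\ref{thm:kakeya_estimate} enters. Fix $\delta$ and set $F=E(\delta)$. Group $\Xi_\delta$ according to the aspect ratio, $G_k=\{\iii\in\Xi_\delta:\alpha_2(A_\iii)/\alpha_1(A_\iii)\approx 2^{-k}\}$ for $0\le k\le K$ with $K\approx\log(1/\delta)$; within $G_k$ every cylinder sits in a rectangle $R_\iii$ of size $\approx 2^k\delta\times\delta$ whose long side points in the direction $v_\iii=A_\iii\theta_1(A_\iii)/\alpha_1(A_\iii)$. Two facts must be established. First, the projection condition \eqref{K-projection} forces $E_\iii$, a translate of $A_\iii(E)$, to have extent $\ge\roo\,\alpha_1(A_\iii)$ along $v_\iii$ (because $A_\iii$ stretches the $\theta_1(A_\iii)$-direction by $\alpha_1(A_\iii)$), so thickening by $\delta$ and using the short side gives the density bound $\LL^2(F\cap R_\iii)\ge\tau\,\alpha_1(A_\iii)\delta$ with $\tau\approx\roo$. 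Second, for incomparable $\iii,\jjj\in G_k$ with common prefix $\kkk=\iii\land\jjj$, the vectors $v_\iii,v_\jjj$ lie in cones $A_\kkk(A_{i'}(\overline{X}))$ and $A_\kkk(A_{j'}(\overline{X}))$, which are disjoint by \eqref{K-separation} and hence separated by a fixed angle $\gamma_0>0$ before $A_\kkk$ is applied; since $A_\kkk$ contracts angles near its major axis by a factor comparable to $\alpha_2(A_\kkk)/\alpha_1(A_\kkk)$, and this aspect ratio dominates $\alpha_2(A_\iii)/\alpha_1(A_\iii)\approx2^{-k}$ (adjoining factors only increases the eccentricity), the directions $v_\iii,v_\jjj$ are separated by $\gtrsim2^{-k}$. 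Choosing the short side of $R_\iii$ to be a constant multiple of $\delta$ matching $\gamma_0$, the hypotheses of Proposition~\ref{thm:kakeya_estimate} hold for $\{R_\iii\}_{\iii\in G_k}$, giving
\[
  \LL^2(F)\gtrsim\frac{\#G_k\cdot 2^k\delta^2}{k}\approx\frac{\delta^{2-t}}{k}\sum_{\iii\in G_k}\fii^t(A_\iii),
\]
where the last step uses $\fii^t(A_\iii)\approx2^k\delta^t$ on $G_k$. Taking the best $k$ and recalling $\sum_{k}\sum_{G_k}\fii^t(A_\iii)\approx1$ over $K\approx\log(1/\delta)$ groups, pigeonholing yields $\LL^2(E(\delta))\gtrsim\delta^{2-t}/(\log(1/\delta))^2$, and therefore $\dimlm(E)=2-\limsup_{\delta\to0}\log\LL^2(E(\delta))/\log\delta\ge t$. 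The main obstacle is precisely this second fact: converting the combinatorial cone separation \eqref{K-separation} into the quantitative angular separation $\ge\alpha_2/\alpha_1$ demanded by Proposition~\ref{thm:kakeya_estimate}, which requires controlling how the prefix map $A_\kkk$ contracts angles in terms of its singular values, together with extracting the positive-density bound from \eqref{K-projection}.

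Finally, continuity of $\dimm$ on the Kakeya class is immediate once $\dimm(E)=t$ is known. The singular value functions $\fii^t(A_\iii)$ depend continuously on the generating matrices, so the pressure $P$ in \eqref{eq:pressure} is jointly continuous in $t$ and in the IFS; by the bound $-\delta\log\ualpha\le P(t)-P(t+\delta)\le-\delta\log\lalpha$ recorded before Theorem~\ref{thm:semiconformal}, $P$ is strictly decreasing in $t$ with slope bounded away from zero, so its unique zero $t$ varies continuously with the IFS, and hence so does $\dimm$.
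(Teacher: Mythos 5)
Your proof of the dimension formula itself is correct in outline, but it takes a genuinely different route from the paper's for the lower bound. The paper stops at $\fii^t(A_\iii)\approx r$ and invokes Theorem \ref{thm:semiconformal} together with Egorov's theorem to pass to a compact set on which $\alpha_k(A_\iii)\approx\fii^t(A_\iii)^{\gamma_k}$ with $\gamma_1+(t-1)\gamma_2=1$; after discarding a small exceptional set, all rectangles have essentially the same dimensions and Proposition \ref{thm:kakeya_estimate} is applied once to the whole family $Z_K(r)$. You instead stop at $\alpha_2(A_\iii)\approx\delta$, split into $O(\log(1/\delta))$ dyadic aspect-ratio classes, apply Proposition \ref{thm:kakeya_estimate} to each class, and pigeonhole. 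This avoids Egorov and the Lyapunov exponents entirely, at the cost of an extra logarithm, which is harmless for Minkowski dimension; it is arguably more elementary. The two nontrivial inputs you isolate are exactly the paper's Lemma \ref{thm:angle}\ref{atleast} (angular separation $\geq c\,\alpha_2(A_{\iii\land\jjj})/\alpha_1(A_{\iii\land\jjj})$, proved via Lemma \ref{thm:norm_alpha}, which also needs \eqref{K-transpose} to place $\theta_1(A_\iii)$ in the cone) and the density bound extracted from \eqref{K-projection}; your claim that the aspect ratio of a prefix dominates that of the full word is correct up to the factor $\cos^4(\beta)$, by Lemma \ref{thm:norm_alpha} and multiplicativity of the determinant, and your constant adjustment of the short sides absorbs this. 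Your direct covering argument for the upper bound is also fine (the paper simply cites the general Douady--Oesterl\'e/Falconer bound).

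There is, however, a genuine gap in your final paragraph. You assert that $P$ is jointly continuous in $t$ and in the IFS because each $\fii^t(A_\iii)$ depends continuously on the matrices. This does not follow: $P$ is defined by a limit over $n$, and a pointwise limit of continuous functions need not be continuous. Subadditivity gives $P(t)=\inf_n\frac1n\log\sum_{\iii\in I^n}\fii^t(A_\iii)$, so only upper semicontinuity in the matrices comes for free; this yields $s\le t+o(1)$ for the zero $s$ of the perturbed pressure, but the reverse inequality requires a lower bound on the perturbed pressure, i.e.\ a comparison that is \emph{uniform in $n$}, and continuity of the singular value pressure in the matrices is a delicate matter for general affine IFS's. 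This is precisely what the paper's Lemma \ref{thm:continuity} supplies using \eqref{K-coneinvariance}: after a rotation the entries of the $A_i$ are bounded below by some $\delta>0$, so an $\eps$-perturbation satisfies $(1-\eps_1)^nA_\iii<B_\iii<(1+\eps_1)^nA_\iii$ for all $\iii\in I^n$, whence $P_\PA(s)+\log\lambda_1\le P_\PB(s)\le P_\PA(s)+\log\lambda_2$ with $\lambda_1,\lambda_2\to1$, and the zero varies continuously. You need this uniform estimate (or an equivalent one) to conclude the continuity of $\dimm$ on the Kakeya class.
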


Let us sketch the main idea of the proof; full details are postponed
until \S \ref{sec:proof}. In order to compute the Minkowski
dimension, we want to estimate the area of the set $E(\delta)$ for
small $\delta>0$, where $E(\delta)$ is the $\delta$-neighborhood
of $E$. In order to do this we take a small $r$ and decompose $E$
as a union of cylinders $\{ E_\iii \}$ with $\fii^t(A_\iii)\approx
r$ (where $t$ is the singularity dimension). The condition
\eqref{K-projection} implies that the projection of $E_\iii$ onto
the major axis of the ellipse $A_i B + a_i$ (where $B$ is some
large ball) has positive Lebesgue measure with a uniform lower
bound. Hence it follows that for large $K$ the
$K\alpha_2(A_\iii)$-neighborhood of $E_\iii$ intersects a
rectangle $R_\iii$, with small side comparable to
$\alpha_2(A_\iii)$ and long side comparable to $\alpha_1(A_\iii)$,
in a set of area comparable to $\alpha_1(A_\iii)\alpha_2(A_\iii)$.

At this point we would like to apply the Kakeya-type estimate of
Proposition \ref{thm:kakeya_estimate}. However, for this we need
all the rectangles to have the same sizes, while
$\alpha_1(A_\iii)$ and $\alpha_2(A_\iii)$ may take many different
values. We deal with this with the help of Theorem
\ref{thm:semiconformal}: with respect to the measure $\mu$ given
by that theorem, the values of $\alpha_1(A_\iii)$ and
$\alpha_2(A_\iii)$ are roughly constant for ``most'' sequences
$\iii$. More precisely, we will obtain that $\alpha_k(A_\iii)
\approx \bigl( \fii^t(A_\iii) \bigr)^{\gamma_k}$ for many sequences $\iii$,
where $\gamma_1+(t-1)\gamma_2=1$. Also, due to the Gibbs property
of $\mu$ expressed in \eqref{eq:semiconformal}, the number of
cylinders $[\iii]$ with $\fii^t(A_\iii)\approx r$ is comparable to
$r^{-1}$.

By \eqref{K-separation}, the angle between the long sides of two
of the rectangles $R_i$ and $R_j$ in the construction are sufficiently
separated. Hence we can apply Proposition \ref{thm:kakeya_estimate} and
conclude that the union of all such rectangles has Lebesgue
measure which is, up to a logarithmic factor, the same as if the
union was disjoint. Therefore, letting $\delta \approx r^{\gamma_2}$ we
conclude
\[
\LL^2\bigl( E(\delta) \bigr) \gtrsim r^{\gamma_1+\gamma_2-1+\eps}
\approx \delta^{2-t+\eps},
\]
where $\eps > 0$ is arbitrarily small, which gives the desired lower
estimate (the upper estimate is well known).
The latter claim of the theorem is now an immediate consequence of the
next lemma.

\begin{lemma} \label{thm:continuity}
  Suppose that for each $i \in I$ there is a contractive invertible
  matrix $A_i \in \R^{2 \times 2}$ such that the condition
  \eqref{K-coneinvariance} is satisfied. Then $(A_1,\ldots,A_\kappa)$
  is a continuity point for the singular value dimension.
\end{lemma}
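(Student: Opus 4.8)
The plan is to reduce the continuity of the zero of the pressure to the continuity of the pressure function itself in the entries of $A_1,\ldots,A_\kappa$. Write $b_n(t) = \log\sum_{\iii\in I^n}\fii^t(A_\iii)$, so that $P(t) = \lim_n \tfrac1n b_n(t)$. Since \eqref{K-coneinvariance} is a stable condition, it persists with a cone $X(\theta,\beta')$ of nearby aperture on a whole neighborhood $\mathcal U$ of the given system, and I would work throughout on such a $\mathcal U$. The subadditivity \eqref{eq:cylinder2} gives, for every system, the bound $P(t)\le\tfrac1n b_n(t)$, with $b_n(t)$ manifestly continuous in the maps for each fixed $n$; this already yields upper semicontinuity of the singular value dimension, and is the easy half. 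The entire content of the lemma is a matching lower bound on $P(t)$ that is uniform over $\mathcal U$.

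The key step is to show that \eqref{K-coneinvariance} forces $\fii^t$ to be super-multiplicative with a constant depending only on the cone: there is $D\ge 1$, uniform over $\mathcal U$ and over $t$ in a compact set, with $\fii^t(A_{\iii\jjj})\ge D^{-1}\fii^t(A_\iii)\fii^t(A_\jjj)$ for all $\iii,\jjj\in I^*$. This is precisely the hypothesis of Theorem~\ref{thm:semiconformal} and the type of estimate in Remark~\ref{P-semiconformal}, and I would establish it as follows. Every product $A_\iii$ again maps $\overline{X(\theta,\beta')}$ strictly into $X(\theta,\beta')$, so by Birkhoff's theorem each $A_\iii$ contracts the Hilbert projective metric of the cone by a factor bounded away from $1$, uniformly in $\iii$ and in the system in $\mathcal U$. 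This gives a bounded distortion estimate for $A_\iii$ along the cone, uniform in $\iii$, whence the cone-restricted norm $N(A_\iii)=\max\{|A_\iii v|:v\in\overline{X(\theta,\beta')},\ |v|=1\}$ is quasi-multiplicative with a uniform constant. A separate elementary argument shows that $N(A_\iii)$ is comparable to $\alpha_1(A_\iii)=\|A_\iii\|$, with a constant depending only on $\beta'$: the minor axis $\theta_2(A_\iii)$ cannot sit deep inside the cone, since otherwise two cone directions straddling it would be spread by $A_\iii$ across an angle exceeding the aperture, contradicting \eqref{K-coneinvariance}. Consequently $\alpha_1(A_\iii)$ is quasi-multiplicative; as $|\det A_\iii|$ is exactly multiplicative and $\fii^t(A_\iii)$ is expressed through $\alpha_1(A_\iii)$ and $|\det A_\iii|$, the super-multiplicativity of $\fii^t$ follows. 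This geometric step, and above all the uniformity of its constants over $\mathcal U$, is the hard part.

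Granting the uniform $D$, I would combine super- and sub-multiplicativity to obtain, for every system in $\mathcal U$ and every $n$,
\[
\frac{b_n(t)-\log D}{n}\ \le\ P(t)\ \le\ \frac{b_n(t)}{n}.
\]
Fixing $t$ and $\eps>0$, choose $n$ with $\log D/n<\eps$; since $b_n(t)$ is continuous in the maps for this fixed $n$, the two bounds show that $P(t)$ oscillates by at most of order $\eps$ near the given system, i.e. $P(t)$ is continuous in $(A_1,\ldots,A_\kappa)$ for each $t$. Finally, by \eqref{eq:cylinder1} the limiting pressure is strictly decreasing with slope bounded away from $0$, so $P(s-\eps)>0>P(s+\eps)$ at its zero $s$; the pointwise continuity just proved then forces the zero of the perturbed pressure into $(s-\eps,s+\eps)$ for all nearby systems, which is the asserted continuity of the singular value dimension.
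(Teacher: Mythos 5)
Your argument is correct, but it takes a genuinely different route from the paper's. The paper does not go through quasi-multiplicativity at all: after rotating so that $\theta=\tfrac{1}{\sqrt2}(1,1)$, it observes that the entries of each $A_i$ have a common sign and are bounded away from zero, so that any nearby $B_i$ satisfies $(1-\eps_1)A_i<B_i<(1+\eps_1)A_i$ entrywise; iterating and invoking the Perron--Frobenius monotonicity of $\alpha_1$ under entrywise domination (together with multiplicativity of the determinant and an a priori bound $s\le T$ on the zero) yields $\lambda_1^n\fii^s(A_\iii)<\fii^s(B_\iii)<\lambda_2^n\fii^s(A_\iii)$ with $\lambda_1,\lambda_2\to1$, whence $P_\PA(s)\in[-\log\lambda_2,-\log\lambda_1]$ and the conclusion follows from continuity of $P_\PA^{-1}$. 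Your route instead makes $P(t)$ a uniform limit of the continuous functions $b_n(t)/n$, for which you need supermultiplicativity of $\fii^t$ with a constant uniform over a neighborhood. One caution: you cannot simply invoke Remark \ref{P-semiconformal} or Lemma \ref{thm:norm_alpha} here, since those assume \eqref{K-transpose} in addition to \eqref{K-coneinvariance}, while the present lemma assumes only the latter; your Birkhoff/bounded-distortion argument is exactly what is needed to bypass this, and it does go through --- the essential points being that every product $A_\iii$ maps $\overline{X(\theta,\beta')}$ into a fixed compact sub-cone on which $w\mapsto|A_\iii w|$ has bounded distortion (by normality of a convex cone of aperture less than $\pi/2$), and that $\max\{|A_\iii w| : w\in\overline{X(\theta,\beta')},\ |w|=1\}$ is comparable to $\alpha_1(A_\iii)$ simply because the cone has positive aperture. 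This is the step that must be written out in full detail. In exchange, your approach yields more than the paper's: continuity (with a modulus) of the entire pressure function on a neighborhood, not just of its zero, and it sidesteps the entrywise-positivity reduction. Your final step, converting continuity of $P$ into continuity of its zero via the uniform bounds $-\delta\log\ualpha\le P(t)-P(t+\delta)\le-\delta\log\lalpha$, is the same in spirit as the paper's appeal to the inverse function $P_\PA^{-1}$.
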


\begin{proof}
  After an appropriate rotation we can assume, without loss of
  generality, that $\theta = \frac{1}{\sqrt{2}}(1,1)$ in the
  condition \eqref{K-coneinvariance}. This implies that for each $i
  \in I$, the coefficients of $A_i$ are either all strictly positive
  or all strictly negative, and this property is preserved under
  small perturbations. Since multiplying by the scalar $-1$ does not
  affect the singular values of $A_{\iii}$ for $\iii \in I^{\ast}$,
  we will assume that for each $i \in I$, the matrix $A_i$ has
  coefficients bounded below by some $\delta > 0$. Note that, since
  $A_i$ is contractive, all of its coefficients are bounded above by
  $1$.

  If $M_1, M_2 \in \R^{2 \times 2}$ and $c \in \R$, by $M_1 < M_2$ we mean
  that the inequality holds for each coefficient, and by $c < M_1$ we will
  mean that all coefficients of $M_1$ are strictly greater than
  $c$. In the same way
  we define $M_1 > M_2$ and $c > M_1$. Note that if $0 < M_1 < M_2$, then
  $\alpha_1 (M_1) < \alpha_1 (M_2)$ by the Perron-Frobenius
  Theorem. Fix $0 < \eps < \delta$, and suppose
  that for each $i \in I$ there is a matrix $B_i \in \R^{2 \times 2}$ such
  that
  \[ - \eps < A_i - B_i < \eps . \]
  Let $\eps_1 = \eps / \delta$, and note that
  \[ (1 - \eps_1) A_i < B_i < (1 + \eps_1) A_i . \]
  Iterating this, we get that if $\iii \in I^n$, then
  \[ (1 - \eps_1)^n A_{\iii} < B_{\iii} < (1 + \eps_1)^n A_{\iii}, \]
  and hence
  \begin{equation}
    \label{eq:ineqalpha} (1 - \eps_1)^n \alpha_1 (A_{\iii}) < \alpha_1
    (B_{\iii}) < (1 + \eps_1)^n \alpha_1 (A_{\iii}) .
  \end{equation}
  A straightforward calculation shows that, for $i \in I$,
  \[ | \det (A_i) | - 8 \eps < | \det (B_i) | < | \det (A_i) | + 8 \eps, \]
  whence, letting
  \[ \eps_2 = \max_{i \in I} 8 \eps | \det (A_i) |^{- 1}, \]
  we obtain
  \[ (1 - \eps_2) | \det (A_i) | < | \det (B_i) | < (1 + \eps_2) | \det
  (A_i) |. \]

  Recall the definition of the pressure function given in
  \eqref{eq:pressure}. Let $P_\PA$ and $P_\PB$ denote the pressures
  corresponding to the matrices $\{A_i \}_{i \in I}$ and $\{B_i \}_{i \in
    I}$, respectively. Let
  $t$ be such that $P_\PA(t)=0$, and let $s$ be
  such that $P_\PB(s)=0$. Our goal is to show that
  $s \to t$ as $\eps \downarrow 0$.

  Let $D=\max_{i\in I} |\det(A_i)|$. Pick any $D' \in
  (D,1)$, and suppose $\eps$ is so small that
  $D+8\eps < D'$. If $s \ge 2$, then it is easy to see that
  the pressure is given by
  \[
  P_\PB(s) = \log\biggl(\sum_{i\in I} |\det(B_i)|^{s/2}\biggr)
  \le \log\kappa - \tfrac{s}{2} |\log D'|.
  \]
  Using this, we see that
  \[
  s \le \max(2\log\kappa/|\log D'|,2) =: T.
  \]

  Since, for $M \in \R^{2 \times 2}$, $\alpha_2 (M) = |\det(M)| / \alpha_1
  (M)$, we obtain from \eqref{eq:ineqalpha}  and the multiplicativity
  of the determinant that, for $\iii \in I^n$,
  \begin{equation}
    \label{eq:ineqfii} \lambda_1^n \fii^s (A_{\iii}) < \fii^s (B_{\iii}) <
    \lambda_2^n \fii^s (A_{\iii}),
  \end{equation}
  where
  \begin{align*}
    \lambda_1 &= (1 - \eps_1) (1 + \eps_1)^{- 1} (1 - \eps_2)^{T/2},\\
    \lambda_2 &= (1 + \eps_1) (1 - \eps_1)^{- 1} (1 + \eps_2)^{T/2} .
  \end{align*}
  In order to see that \eqref{eq:ineqfii} holds, it is convenient to
  consider the cases $0\leq s<1$, $1\leq s < 2$, and $2\leq s \leq T$
  separately. From \eqref{eq:ineqfii}, we obtain
  \[
  P_\PA(s) + \log (\lambda_1) \le P_\PB (s) \le P_\PA(s) + \log
  (\lambda_2),
  \]
  yielding
  \[
  P_\PA(s) \in [- \log (\lambda_2), - \log (\lambda_1)].
  \]
  Since $P_\PA$ is a continuous, strictly decreasing function, so is its
  inverse $P_\PA^{-1}$. But $\lambda_1, \lambda_2 \rightarrow 1$ as $\eps
  \rightarrow 0$, so the continuity of $P_\PA^{-1}$ implies that $s
  \to t$ as $\eps \downarrow 0$. This is exactly what we
  wanted to show.
\end{proof}

\section{Proof of the main result} \label{sec:proof}

This section is dedicated to the proof of Theorem
\ref{thm:main_result}. We first collect several lemmas which will
be used in the proof. These lemmas are geometric consequences of
Definition \ref{def:kakeya}. We remark that some of these lemmas
are analogous to results in \cite{HueterLalley1995}.

\begin{lemma} \label{thm:norm_alpha}
  Suppose that for each $i \in I$ there is a contractive invertible
  matrix $A_i \in \R^{2 \times 2}$ such that the conditions
  \eqref{K-coneinvariance} and \eqref{K-transpose} are satisfied. Then
  $|A_\iii x| \ge \cos(\beta) \alpha_1(A_\iii) |x|$ for all $x \in
  X(\theta,\beta)$ and all $\iii\in I^*$. Moreover,
  $\alpha_1(A_{\iii\jjj}) \ge \cos^2(\beta) \alpha_1(A_\iii)
  \alpha_1(A_\jjj)$ whenever $\iii,\jjj \in I^*$.
\end{lemma}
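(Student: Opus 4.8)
The plan is to reduce both assertions to the single geometric fact that the dominant (right) singular direction $\theta_1(A_\iii)$ lies in the closed cone $\overline{X(\theta,\beta)}$ for every $\iii \in I^*$. The starting point is the elementary singular-value estimate
\[
|A_\iii x|^2 = \sum_{k=1}^2 \alpha_k(A_\iii)^2 \bigl(\theta_k(A_\iii)\cdot x\bigr)^2 \ge \alpha_1(A_\iii)^2\bigl(\theta_1(A_\iii)\cdot x\bigr)^2,
\]
valid for all $x$, so that $|A_\iii x| \ge \alpha_1(A_\iii)\,|\theta_1(A_\iii)\cdot x|$. Once I know that $\theta_1(A_\iii) \in \overline{X(\theta,\beta)}$, then for any $x \in X(\theta,\beta)$ both $x$ and $\theta_1(A_\iii)$ make an angle at most $\beta/2$ with the line spanned by $\theta$; hence the angle between them is at most $\beta < \pi/2$ and $|\theta_1(A_\iii)\cdot x| \ge \cos(\beta)\,|x|$. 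Substituting into the estimate above gives the first assertion.

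The heart of the matter is therefore to show $\theta_1(A_\iii) \in \overline{X(\theta,\beta)}$; this is the step I expect to be the main obstacle, and it is exactly where both cone conditions are used together. First, iterating \eqref{K-coneinvariance} along the word $\iii$ shows $A_\iii(\overline{X(\theta,\beta)}) \subset X(\theta,\beta)$, and iterating \eqref{K-transpose} gives $A_\iii^*(\overline{X(\theta,\beta)}) \subset X(\theta,\beta)$ (recall $A_\iii^* = A_{i_n}^*\cdots A_{i_1}^*$). Consequently the symmetric positive definite matrix $S = A_\iii^* A_\iii$ satisfies $S(\overline{X(\theta,\beta)}) \subset \overline{X(\theta,\beta)}$. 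Since $\theta_1(A_\iii)$ is the eigenvector of $S$ for its largest eigenvalue $\alpha_1(A_\iii)^2$, and since $\beta/2 < \pi/4$ forces the closed cone to be an arc of directions of angular width $\beta < \pi/2$, too narrow to contain simultaneously the orthogonal pair of eigendirections of $S$, the dominant eigendirection must lie in the cone: starting from any direction in $\overline{X(\theta,\beta)}$ not orthogonal to $\theta_1(A_\iii)$ and normalising the iterates $S^n$, the resulting sequence stays in the (scale-invariant, closed) cone and converges to $\pm\theta_1(A_\iii)$, whence $\theta_1(A_\iii) \in \overline{X(\theta,\beta)}$. When $\alpha_1(A_\iii)=\alpha_2(A_\iii)$ the first assertion is trivial, so this degenerate case may be set aside. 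This is essentially the Perron--Frobenius principle for the cone-preserving map $S$.

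With the first assertion in hand, the second follows quickly. Put $v = \theta_1(A_\jjj)$, a unit vector; then $A_\jjj v = \alpha_1(A_\jjj)\,u$, where $u$ is a unit vector whose direction lies in $A_\jjj(\overline{X(\theta,\beta)}) \subset X(\theta,\beta)$ by cone invariance. Applying the first assertion to $A_\iii$ and the vector $u \in X(\theta,\beta)$ yields
\[
\alpha_1(A_{\iii\jjj}) = \|A_\iii A_\jjj\| \ge |A_\iii A_\jjj v| = \alpha_1(A_\jjj)\,|A_\iii u| \ge \cos(\beta)\,\alpha_1(A_\iii)\,\alpha_1(A_\jjj).
\]
Since $0 < \cos(\beta) < 1$, this is even stronger than the stated bound with $\cos^2(\beta)$, completing the proof. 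The only genuinely delicate point is the localisation of the dominant singular direction inside the cone carried out in the second paragraph; the remaining steps are routine manipulations of the singular value decomposition together with the angle estimate.
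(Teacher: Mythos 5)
Your proof is correct and follows essentially the same route as the paper: both hinge on the Perron--Frobenius localisation of $\theta_1(A_\iii)$ inside the cone (using \eqref{K-coneinvariance} and \eqref{K-transpose} to see that $A_\iii^*A_\iii$ preserves it) and then the singular value decomposition of $x$. Your treatment of the second claim, testing against $\theta_1(A_\jjj)$ rather than against $\theta$ as the paper does, is a minor variant that even yields the slightly sharper constant $\cos(\beta)$ in place of $\cos^2(\beta)$.
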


\begin{proof}
  Let $\iii \in I^*$, $x\in X(\theta,\beta)$,
  and write $x = x_1 \theta_1(A_\iii) +  x_2 \theta_2(A_\iii)$. We
  may assume that $|x|=1$. Since $\theta_1(A_\iii)$ is, by definition,
  the eigenvector of $A_\iii^* A_\iii$ corresponding
  to the largest eigenvalue, it follows from \eqref{K-coneinvariance},
  \eqref{K-transpose}, and the Perron-Frobenius Theorem
  that $\theta_1(A_\iii)\in X(\theta,\beta)$ and $|x_1| = x\cdot
  \theta_1(A_\iii) \ge \cos(\beta)$  (note that
  the Perron-Frobenius Theorem is usually stated for matrices preserving
  the positive cone, but it holds for any cone by a change of
  coordinates). Therefore
  \begin{equation*}
  |A_\iii x|^2 = | A_\iii^* A_\iii x \cdot x | = \alpha_1(A_\iii)^2 x_1^2
  +  \alpha_2(A_\iii)^2 x_2^2 \ge \alpha_1(A_\iii)^2 \cos^2(\beta)
  \end{equation*}
  giving the first claim.

  The second claim follows immediately since
  \begin{equation*}
    \alpha_1(A_{\iii\jjj}) \ge |A_{\iii}A_{\jjj}\theta|
    \ge \cos(\beta) \alpha_1(A_{\iii}) |A_{\jjj}\theta| \ge \cos^2(\beta)
    \alpha_1(A_\iii) \alpha_1(A_\jjj)
  \end{equation*}
  whenever $\iii,\jjj \in I^*$.
\end{proof}

\begin{remark} \label{P-semiconformal}
  Suppose that for each $i \in I$ there is a contractive invertible
  matrix $A_i \in \R^{2 \times 2}$ such that the conditions
  \eqref{K-coneinvariance} and \eqref{K-transpose} are satisfied.
  It follows immediately from Lemma \ref{thm:norm_alpha} that there
  exists a constant $D \ge 1$ such that for every $t\ge 0$
  \begin{equation*}
    D^{-1}\fii^t(A_\iii)\fii^t(A_\jjj) \le \fii^t(A_{\iii\jjj})
  \end{equation*}
  whenever $\iii,\jjj \in I^*$. In fact, $D=\cos^{-2}(\beta)$
  works.
\end{remark}

\begin{lemma} \label{thm:angle}
  Suppose that for each $i \in I$ there is a contractive invertible
  matrix $A_i \in \R^{2 \times 2}$ such that the conditions
  \eqref{K-coneinvariance} and \eqref{K-transpose} are satisfied. Then
  \newcounter{savedenumi}
  \begin{enumerate}
  \renewcommand{\labelenumi}{(\roman{enumi})}
  \renewcommand{\theenumi}{(\roman{enumi})}
  \item \label{atmost}
    the angle between the vectors $A_\iii\bigl( \theta_1(A_\iii)
    \bigr)$ and $A_\iii x$
    is at most a constant times $\alpha_2(A_\iii)/\alpha_1(A_\iii)$ for
    every $\iii \in I^*$ and $x\in X(\theta,\beta)$.
  \setcounter{savedenumi}{\value{enumi}}
  \end{enumerate}
  If in addition the condition \eqref{K-separation} is satisfied, then
  \begin{enumerate}
  \setcounter{enumi}{\value{savedenumi}}
  \renewcommand{\labelenumi}{(\roman{enumi})}
  \renewcommand{\theenumi}{(\roman{enumi})}
  \item \label{atleast}
    the angle between the vectors $A_\iii x$ and $A_\jjj y$ is at least
    a constant times $\alpha_2(A_{\iii \land \jjj})/\alpha_1(A_{\iii
    \land \jjj})$ for every $\iii,\jjj \in I^*$ and $x,y \in
    X(\theta,\beta)$.
  \end{enumerate}
\end{lemma}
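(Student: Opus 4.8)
The plan is to treat the two claims separately, each via the singular value decomposition of the pertinent matrix together with the lower bound on the $\theta_1$-component furnished by Lemma \ref{thm:norm_alpha}.

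For \eqref{atmost} I would fix $\iii \in I^*$, abbreviate $A = A_\iii$ and $\alpha_k = \alpha_k(A)$, and take $x \in X(\theta,\beta)$ with $|x|=1$. Writing $x = x_1\theta_1(A) + x_2\theta_2(A)$, the images $A\theta_1(A)$ and $A\theta_2(A)$ are orthogonal of lengths $\alpha_1$ and $\alpha_2$, so $Ax$ has, with respect to the major axis direction $A(\theta_1(A))$, an angle whose tangent equals $\alpha_2|x_2|/(\alpha_1|x_1|)$. Lemma \ref{thm:norm_alpha} gives $|x_1| \ge \cos\beta$, hence $|x_2| \le \sin\beta$, so this tangent is at most $\tan\beta \cdot \alpha_2/\alpha_1$. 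Since an angle in $[0,\pi/2)$ is bounded by its tangent, claim \eqref{atmost} follows with constant $\tan\beta$.

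For \eqref{atleast} I take $\iii$ and $\jjj$ incomparable (the case in which $\iii\land\jjj$ is meaningful and which is used in the applications) and set $\kkk = \iii\land\jjj$, so that $\iii = \kkk a\iii''$ and $\jjj = \kkk b\jjj''$ with $a \ne b$ and $\iii'',\jjj''$ possibly empty. By cone invariance \eqref{K-coneinvariance}, $u := A_aA_{\iii''}x$ lies in $A_a\bigl(\overline{X(\theta,\beta)}\bigr)$ and $v := A_bA_{\jjj''}y$ lies in $A_b\bigl(\overline{X(\theta,\beta)}\bigr)$, while $A_\iii x = A_\kkk u$ and $A_\jjj y = A_\kkk v$. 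The first step is to extract a uniform angular gap from the separation hypothesis \eqref{K-separation}: the sets $A_a\bigl(\overline{X(\theta,\beta)}\bigr)$ and $A_b\bigl(\overline{X(\theta,\beta)}\bigr)$ are closed symmetric (double) cones meeting only at the origin, so no line lies in both; by compactness of their unit directions the angle between $u$ and $v$ is bounded below by some $\delta_0>0$, and taking the minimum over the finitely many pairs $a\ne b$ makes $\delta_0$ uniform. The second step is to quantify how $A_\kkk$ shrinks this gap. Since $u,v \in X(\theta,\beta)$, Lemma \ref{thm:norm_alpha} forces their $\theta_1(A_\kkk)$-components to dominate, so writing $u,v$ in the basis $\theta_1(A_\kkk),\theta_2(A_\kkk)$ their slopes are bounded by $\tan\beta$ and, because $\tan$ is expanding, differ by at least $\delta_0$. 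Expressing the angle of each image from the major axis $A_\kkk(\theta_1(A_\kkk))$ as $\arctan$ of $\bigl(\alpha_2(A_\kkk)/\alpha_1(A_\kkk)\bigr)$ times the slope, the mean value theorem (with slopes kept bounded, so $\arctan$ stays comparable to its argument) then yields that the angle between $A_\kkk u$ and $A_\kkk v$ is at least $\delta_0\cos^2\beta \cdot \alpha_2(A_\kkk)/\alpha_1(A_\kkk)$, which is the assertion.

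The single-matrix angle estimates are routine; the crux is the second step of \eqref{atleast}, namely converting the fixed, scale-independent separation $\delta_0$ into the scale-dependent bound $\alpha_2(A_\kkk)/\alpha_1(A_\kkk)$. The mechanism is that $A_\kkk$ contracts angles near its major axis by a factor comparable to $\alpha_2(A_\kkk)/\alpha_1(A_\kkk)$, and making this precise relies on the uniform lower bound on the $\theta_1$-components from Lemma \ref{thm:norm_alpha} to keep the relevant slopes bounded throughout.
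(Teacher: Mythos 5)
Your proposal is correct and follows essentially the same route as the paper: part \ref{atmost} via the lower bound on the $\theta_1(A_\iii)$-component from Lemma \ref{thm:norm_alpha}, and part \ref{atleast} by splitting off the common prefix $\kkk=\iii\land\jjj$, extracting a uniform angular gap from \eqref{K-separation} by compactness, and then showing $A_\kkk$ contracts angles inside the cone by a factor comparable to $\alpha_2(A_\kkk)/\alpha_1(A_\kkk)$. The only divergence is in the last quantitative step, where the paper compares two expressions for the area of the triangle with vertices $0$, $A_\kkk x$, $A_\kkk y$ while you compute slopes in the singular basis and apply the mean value theorem to $\arctan$; both yield the same bound from the same inputs, and your explicit restriction to incomparable $\iii,\jjj$ matches the paper's implicit assumption.
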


\begin{proof}
  We first prove \ref{atmost}. Fix $\iii\in I^*$.
  Let $x \in S^1 \cap X(\theta,\beta)$ and denote by $\gamma$ the
  (smaller) angle between $A_\iii x$ and the major axis of the ellipse
  $A_\iii\bigl( B(0,1) \bigr)$, that is, the vector
  $A_\iii\bigl( \theta_1(A_\iii) \bigr)$. Since, by
  Lemma \ref{thm:norm_alpha}, we have $|A_\iii x| \ge
  \cos(\beta)\alpha_1(A_\iii)$, it follows that $|\sin(\gamma)| \le
  \alpha_2(A_\iii)/\bigl(\cos(\beta)\alpha_1(A_\iii)\bigr)$. We
  conclude
  \begin{equation*}
    |\gamma| \le \tfrac{\pi}{2}|\sin(\gamma)| \le \tfrac{\pi}{2\cos(\beta)}
    \alpha_2(A_\iii)/\alpha_1(A_\iii).
  \end{equation*}

  Next we show \ref{atleast}. Write $\iii=\kkk\iii'$ and
  $\jjj=\kkk\jjj'$, where $\kkk=\iii\wedge\jjj$, and notice that
  $\iii'$ and $\jjj'$ start with different symbols. Therefore it
  follows from \eqref{K-separation} that there exists a constant $c>0$
  (independent of $\iii$ and $\jjj$) such that the angle between
  $A_{\iii'}x$ and $A_{\jjj'}y$ is at least $c$ for any $x,y \in
  X(\theta,\beta)$. Hence it will be enough to prove the following
  claim: Given $c_1>0$ there is $c_2>0$ such that if $x,y \in
  S^1 \cap X(\theta,\beta)$ and
  $|x-y|\ge c_1$, then the angle between $A_\kkk x$ and $A_\kkk y$
  is at least $c_2 \alpha_2(A_\kkk)/\alpha_1(A_\kkk)$ for all
  $\kkk\in I^*$.

  To prove the claim consider the triangle with vertices $0,
  A_\kkk x, A_\kkk y$. Denote the angle at $0$ by $\gamma$. By Lemma
  \ref{thm:norm_alpha},
  the sides containing $0$ have lengths between
  $\cos(\beta) \alpha_1(A_\kkk)$ and $\alpha_1(A_\kkk)$, while by the
  assumption, the length of the third side is at least $c_1 \alpha_2(A_\kkk)
  $. We compute the area of the triangle in two ways. On the one
  hand, it is $|A_\kkk x||A_\kkk y|\sin(\gamma)/2 \le
  \alpha_1(A_\kkk)^2 \sin(\gamma)/2$. Since one of the other two angles
  of the triangle must be at least $\pi/6$ (otherwise $\gamma >
  2\pi/3$ and there is nothing to prove), the
  area of the triangle is also at least $\cos(\beta) c_1 \alpha_1(A_\kkk)
  \alpha_2(A_\kkk)\sin(\pi/6)/2$. By comparing these two estimates,
  the claim follows. The proof is complete.
\end{proof}

In \cite[\S 3]{HueterLalley1995}, it is claimed that
\eqref{K-coneinvariance} implies that the matrices $A_i$ are
strict contractions acting on the space of lines through the
origin with positive slope, where the metric is the smaller angle
between them. This assertion is wrong, as the following example
shows: let
\[
A = \left(%
\begin{array}{cc}
  1 & \eps \\
  \eps & \eps \\
\end{array}%
\right).
\]
Let $\ell$ be the line through the origin and $(\eps,1)$ and let
$\ell'$ be the line through the origin and $(2\eps,1)$. Then a
simple calculation shows that the angle between the lines $A\ell$
and $A\ell'$ is of the order of $\eps^{-1}$ times the angle
between $\ell$ and $\ell'$ as $\eps\downarrow 0$. However, the
next lemma, and in particular \eqref{eq:repeated_coneinv}, shows
that \cite[Proposition 3.1]{HueterLalley1995} is still correct.

\begin{lemma} \label{thm:eta_alpha}
  Suppose that for each $i \in I$ there is a contractive invertible
  matrix $A_i \in \R^{2 \times 2}$ such that the condition
  \eqref{K-coneinvariance} is satisfied. Then there exist constants
  $C \ge 1$ and $0 < \eta < 1$ such that
  \begin{equation*}
    \alpha_2(A_\iii) \le C \eta^{|\iii|}\alpha_1(A_\iii)
  \end{equation*}
  whenever $\iii \in I^*$.
\end{lemma}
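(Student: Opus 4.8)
The plan is to bound the eccentricity $\alpha_2(A_\iii)/\alpha_1(A_\iii)$ by the angular width of the image cone $A_\iii\bigl(X(\theta,\beta)\bigr)$, and then to show that this width decays geometrically in $|\iii|$.

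First I would record an elementary estimate: for any contractive invertible $A\in\R^{2\times2}$, the angular width $W(A)$ of the image $A\bigl(X(\theta,\beta)\bigr)$, measured in the projective line $\R P^1$ of directions, satisfies $\alpha_2(A)/\alpha_1(A)\le W(A)/\beta$. To see this, write the singular value decomposition $A=O\,D\,O'$ with $O,O'$ orthogonal and $D=\operatorname{diag}(\alpha_1(A),\alpha_2(A))$. Post-composition with $O$ preserves angular widths, while $O'$ merely rotates $X(\theta,\beta)$ into another cone of the same width $\beta$, so $W(A)=W\bigl(D(O'(X(\theta,\beta)))\bigr)$ and it suffices to estimate the width of the image of a width-$\beta$ cone (centered at an arbitrary direction) under $D$. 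Parametrizing directions by the angle $\psi$ to the $\theta_1$-axis, the map induced by $D$ on $\R P^1$ sends $\tan\psi\mapsto \bigl(\alpha_2(A)/\alpha_1(A)\bigr)\tan\psi$, whose derivative equals $r/(\cos^2\psi+r^2\sin^2\psi)\ge r$, where $r=\alpha_2(A)/\alpha_1(A)\le1$. Integrating over the width-$\beta$ arc gives $W(A)\ge r\beta$, which is the claim.

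It then remains to prove that $W(A_\iii)\le C'\eta^{|\iii|}$ for constants $C'\ge1$ and $0<\eta<1$. Let $J\subset\R P^1$ be the open arc of directions determined by $X(\theta,\beta)$; by \eqref{K-coneinvariance} each $A_i$ induces a projective self-map carrying the closed arc $\overline J$ into the open arc $J$, and by iteration $A_\iii(\overline J)\subseteq A_{i_1}(\overline J)\Subset J$. The crucial point is that, although the angular width need not shrink under a single $A_i$ --- near $\partial J$ a single map may even enlarge angles, as the example preceding the lemma shows --- each $A_i$ is a genuine uniform contraction of the \emph{Hilbert (cross-ratio) metric} $d_J$ on $J$. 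Indeed, since $A_i(\overline J)$ is a compact subarc of $J$, its $d_J$-diameter $\Delta_i$ is finite, and a projective map sending $\overline J$ into a compact subarc of $J$ contracts $d_J$ by a factor $\eta_i<1$ controlled by $\Delta_i$ (a classical property of the cross-ratio metric); set $\eta=\max_i\eta_i<1$ and $\Delta=\max_i\Delta_i$. Composing, for any $x,y\in\overline J$ with $n=|\iii|$ I obtain $d_J(A_\iii x,A_\iii y)\le \eta^{\,n-1}\,d_J(A_{i_n}x,A_{i_n}y)\le \eta^{\,n-1}\Delta$, so $\diam_{d_J}A_\iii(\overline J)$ decays geometrically. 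Finally, all the arcs $A_\iii(\overline J)$ lie in the fixed compact subarc $J'=\bigcup_i A_i(\overline J)\Subset J$, on which $d_J$ is bi-Lipschitz equivalent to the angular metric; hence $W(A_\iii)$ also decays geometrically, and combining with the first step gives $\alpha_2(A_\iii)\le (W(A_\iii)/\beta)\,\alpha_1(A_\iii)\le C\eta^{|\iii|}\alpha_1(A_\iii)$.

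The main obstacle is precisely the failure of one-step angular contraction flagged in the discussion before the lemma. The essential idea that overcomes it is to replace the angular metric by the projectively natural cross-ratio metric, in which cone invariance immediately yields uniform contraction, and then to transfer the conclusion back to angular width using that every image arc is trapped in a compact subarc bounded away from $\partial J$.
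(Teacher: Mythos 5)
Your argument is correct and follows essentially the same route as the paper: both proofs reduce the lemma to showing that the angular width of the image cone $A_\iii\bigl(X(\theta,\beta)\bigr)$ decays geometrically in $|\iii|$, and both obtain that decay by trading the angle metric (in which a single $A_i$ need not contract, as the example before the lemma shows) for a projectively adapted metric on the space of directions in which cone invariance forces uniform contraction. The differences are only in implementation: you invoke Birkhoff's contraction theorem for the Hilbert (cross-ratio) metric where the paper works with the bi-Lipschitz equivalent metric $d(\ell_1,\ell_2)=\bigl|\log\tan\bigl(\varangle(\ell_0,\ell_1)\bigr)-\log\tan\bigl(\varangle(\ell_0,\ell_2)\bigr)\bigr|$ and verifies the contraction by an explicit derivative computation, and you pass from angular width to the singular-value ratio via a pointwise lower bound on the derivative of the projective action of $\mathrm{diag}\bigl(\alpha_1(A),\alpha_2(A)\bigr)$, where the paper instead compares the area $\beta\,\alpha_1(A_\iii)\alpha_2(A_\iii)$ of the image sector with the area of the thin sector containing it.
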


\begin{proof}
  Let us first show that there exists $C_0 \ge 1$ and $0 < \eta < 1$
  such that
  \begin{equation} \label{eq:repeated_coneinv}
    A_\iii\bigl( X(\theta,\beta) \bigr) \subset
    X(A_\iii\theta/|A_\iii\theta|, C_0 \eta^{|\iii|}\beta)
  \end{equation}
  whenever $\iii \in I^*$.
  Denote the space of all lines through the origin which are
  contained in $\overline{X(\theta,\beta)}$ by
  $\mathcal{P}(\theta,\beta)$. The smaller angle between any two
  lines $\ell_1,\ell_2$ will be denoted by
  $\varangle(\ell_1,\ell_2)$. Since the maps $A_i$ are not necessarily
  contractions with respect to the metric $\varangle$, we will make
  use of a different, but equivalent, metric. This metric is used in
  some proofs of the Perron-Frobenius Theorem, see for example
  \cite[Lemma 3.4]{PollicottYuri1998}.

  Let $\ell_0$ be a line through the origin which is not contained
  in $X(\theta,\beta)$, and such that $\varangle(\ell_0,\ell)<\pi/2$
  for all $\ell\in\mathcal{P}(\theta,\beta)$. Define
  $d \colon \mathcal{P}(\theta,\beta)^2\rightarrow\R$ by setting
  \[
  d(\ell_1,\ell_2) =
  \bigl|\log\tan\bigl( \varangle(\ell_0,\ell_1) \bigr) - \log\tan\bigl(
  \varangle(\ell_0,\ell_2) \bigr)\bigr|
  \]
  as $\ell_1,\ell_2 \in \mathcal{P}(\theta,\beta)$.
  It is easy to verify that $d$ is indeed a metric and, moreover,
  there is a constant $C_0 \ge 1$ such that
  \begin{equation} \label{eq:equivalentmetrics}
    C_0^{-1/2} \varangle(\ell_1,\ell_2) \le d(\ell_1,\ell_2) \le C_0^{1/2}
    \varangle(\ell_1,\ell_2),
  \end{equation}
  for all $\ell_1,\ell_2\in\mathcal{P}(\theta,\beta)$.
  This is true since $\log\tan$ has a bounded derivative on a compact
  subset of $(0,\pi/2)$. We claim that
  the maps $A_i$ acting on $\mathcal{P}(\theta,\beta)$ are uniformly
  contractive with respect to $d$. To prove this, we may fix $i \in I$
  and assume that
  \[
  A_ i = \left(%
    \begin{array}{cc}
      a & b \\
      c & d \\
    \end{array}%
  \right).
  \]
  Moreover, after an appropriate rotation we can assume that
  $\ell_0$ is the $x$-axis, and all elements of
  $\mathcal{P}(\theta,\beta)$ have positive slope. Hence
  $a,b,c,d$ are nonzero and have the same sign. We will denote
  the slope of $\ell\in\mathcal{P}(\theta,\beta)$ by $s(\ell)$.
  After this normalization, we have
  \[
  d(A_i\ell_1,A_i\ell_2) = |\log\bigl(s(A_i\ell_1)\bigr) -
  \log\bigl(s(A_i\ell_2)\bigr)|,
  \]
  where
  \[
  s(A_i\ell) = \frac{c+ds(\ell)}{a+bs(\ell)}
  \]
  for any $\ell \in \mathcal{P}(\theta,\beta)$.
  In order to verify the claim, it suffices to show that the
  derivative of the function $g \colon \R \to \R$, $g(s) =
  \log\frac{c+de^s}{a+be^s}$, is strictly less than $1$ in absolute
  value. It is straightforward to see that
  \[
  |g'(s)| = \frac{|ad-bc|e^s}{(a+be^s)(c+de^s)}
  \]
  attains its maximum value at $s_0 = \tfrac12
  \log\tfrac{ac}{bd}$. Some elementary algebra shows that
  \begin{equation*}
    |g'(s_0)| = \frac{|ad-bc|}{ad+bc+2\sqrt{abcd}} < 1
  \end{equation*}
  which is exactly what we wanted.

  Using the claim and \eqref{eq:equivalentmetrics}, we see that there
  exists $0<\eta<1$ such that
  \[
  \varangle(A_\iii \ell_1, A_\iii \ell_2) < C_0 \eta^{|\iii|}
  \varangle(\ell_1,\ell_2),
  \]
  for any $\iii\in I^*$ and
  $\ell_1,\ell_2\in\mathcal{P}(\theta,\beta)$. Taking
  $\ell_1,\ell_2$ as the two lines which make up the boundary of
  $X(\theta,\beta)$, the assertion \eqref{eq:repeated_coneinv}
  follows.

  To finally prove the lemma, notice that for each $\iii \in I^*$,
  we have
  \begin{align*}
    \LL^2\bigl( A_\iii \bigl( B(0,1) \cap X(\theta,\beta) \bigr)
    \bigr) &= \LL^2\bigl( B(0,1) \cap X(\theta,\beta) \bigr)
    \det(A_\iii) \\ &= \beta \alpha_1(A_\iii) \alpha_2(A_\iii).
  \end{align*}
  On the other hand, using \eqref{eq:repeated_coneinv}, we have
  \begin{align*}
    \LL^2\bigl( A_\iii \bigl( B(0,1) \cap X(\theta,\beta) \bigr)
    \bigr) &\le \LL^2\bigl( B(0,\alpha_1(A_\iii)) \cap
    X(A_\iii\theta/|A_\iii\theta|,C_0\eta^{|\iii|}\beta) \bigr) \\ &=
    C \eta^{|\iii|}\beta \alpha_1(A_\iii)^2
  \end{align*}
  for some constant $C \ge 1$.
  Comparing the two last displayed formulas yields the result.
\end{proof}

Now we are ready to prove the main theorem.

\begin{proof}[Proof of Theorem \ref{thm:main_result}]
  The upper bound $\dimum(E) \le t$ holds in general,
  for example, see \cite{DouadyOesterle1980} and
  \cite{Falconer1988}. Since \eqref{K-projection} implies $\dimh(E)
  \ge 1$, it is enough to prove that $\dimlm(E) \ge t$. The
  continuity assertion will then follow from Lemma
  \ref{thm:continuity}.

  Recalling Remark \ref{P-semiconformal}, let
  $\mu$, $1 > \lambda_1(\mu) \ge \lambda_2(\mu) > 0$, and $c,D \ge 1$ be
  as in Theorem \ref{thm:semiconformal}. Fix $0 < \eps \le \tfrac12
  c^{-2}D^{-1}\lalpha \le \tfrac12$.
  Using Egorov's Theorem, we find an integer $n_0$ and a compact set
  $K \subset I^\infty$ so that $\mu(I^\infty \setminus K) < \eps$ and
  \begin{equation*}
    \lambda_k(\mu)^{n(1+\eps)} \le \alpha_k(A_{\iii|_n}) \le
    \lambda_k(\mu)^{n(1-\eps)}
  \end{equation*}
  whenever $\iii \in K$, $k \in \{ 1,2 \}$, and $n \ge n_0$.
  Denoting
  \begin{equation*}
    \gamma_k = \frac{\log\lambda_k(\mu)}{\log\lambda_1(\mu)\lambda_2(\mu)^{t-1}}
  \end{equation*}
  as $k \in \{ 1,2 \}$, we notice that $\gamma_1 + (t-1)\gamma_2 = 1$
  and
  \begin{equation} \label{eq:alpha_approx}
    \fii^t(A_{\iii|_n})^{\gamma_k(1+\eps)/(1-\eps)} \le
    \alpha_k(A_{\iii|_n}) \le \fii^t(A_{\iii|_n})^{\gamma_k(1-\eps)/(1+\eps)}
  \end{equation}
  whenever $\iii \in K$, $k \in \{ 1,2 \}$, and $n \ge n_0$. Since
  $\eps$ can be arbitrarily small, \eqref{eq:alpha_approx} together
  with Lemma \ref{thm:eta_alpha} imply that $\gamma_1 < \gamma_2$.

  For $r > 0$ define
  \begin{equation*}
    Z(r) = \{ \iii \in I^* : \fii^t(A_\iii) \le r < \fii^t(A_{\iii^-})
    \},
  \end{equation*}
  and notice that the set $Z(r)$ is incomparable for every $r>0$.
  Denote also $Z_K(r) = \{ \iii \in Z(r) : [\iii] \cap K \ne \emptyset
  \}$. Since
  \begin{align*}
    (cD)^{-1}\lalpha\sum_{\iii \in Z(r) \setminus Z_K(r)} r &\le
    c^{-1}\sum_{\iii \in Z(r) \setminus Z_K(r)} \fii^t(A_\iii) \\ &\le
    \sum_{\iii \in Z(r) \setminus Z_K(r)} \mu([\iii]) \le \mu(I^\infty
    \setminus K) < \eps,
  \end{align*}
  and, similarly,
  \begin{equation*}
    \sum_{\iii \in Z(r)} r \ge \sum_{\iii \in Z(r)} \fii^t(A_\iii) \ge
    c^{-1},
  \end{equation*}
  it follows that
  \begin{equation} \label{eq:zetacardinality}
    \# Z_K(r) \ge (c^{-1} - \eps cD\lalpha^{-1})r^{-1} \ge \tfrac12
    c^{-1} r^{-1}.
  \end{equation}
  Hence, choosing $r>0$ small enough so that $|\iii| \ge n_0$ for
  every $\iii \in Z(r)$ and denoting $\xi = \min_{k \in \{ 1,2 \}}
  (D^{-1}\lalpha)^{3\gamma_k}$, it follows from
  \eqref{eq:alpha_approx} that
  \begin{equation} \label{eq:alpha_approx2}
    \xi r^{\gamma_k(1+4\eps)} \le
    \alpha_k(A_\iii)
    \le r^{\gamma_k(1-2\eps)}
  \end{equation}
  whenever $\iii \in Z_K(r)$ and $k \in \{ 1,2 \}$.

  Fix $\iii \in I^*$. Let $v_\iii$ be the unit vector with direction
  equal to the major axis of the ellipse $A_\iii\bigl(B(0,1)\bigr)$.
  Explicitly, $v_\iii = A_\iii\bigl( \theta_1(A_\iii) \bigr)/\alpha_1(A_\iii)$.
  Since
  \[
  v_\iii \cdot A_\iii x = A_\iii^* v_\iii \cdot x = \alpha_1(A_\iii)
  \theta_1(A_\iii)\cdot x,
  \]
  for each $x \in E$,
  it follows from \eqref{K-projection} that
  $\LL^1\bigl( \{ v_\iii \cdot x : x \in E_\iii \}
  \bigr) \ge \roo\alpha_1(A_\iii)$. Hence there exists a
  constant $T \ge 1$ so that for each $\iii \in I^*$ there is a
  rectangle $R_\iii$ of size
  $\alpha_1(A_\iii)\times\alpha_2(A_\iii)$ with long side parallel
  to $A_\iii\bigl( \theta_1(A_\iii) \bigr)$ such that the
  $T\alpha_2(A_\iii)$-neighborhood of $E_\iii$ intersects
  $R_\iii$ in a set of $\LL^2$-measure at least
  $\roo\alpha_1(A_\iii)\alpha_2(A_\iii)$.

  Using Lemma \ref{thm:angle}\ref{atleast} and
  (\ref{eq:alpha_approx}), we get that there exists a constant
  $0<\omega'<1$ such that if $\iii,\jjj\in Z_K(r)$, $\iii\neq\jjj$, and
  $|\iii\land\jjj|\ge n_0$, then the
  angle between the long sides of the rectangles
  $R_\iii$ and $R_\jjj$, denoted by $\varangle(R_\iii,R_\jjj)$, is at
  least
  \begin{equation*}
    \omega' \alpha_2(A_{\iii \land \jjj})/\alpha_1(A_{\iii \land \jjj})
    \ge \frac{\omega' \fii^t(A_{\iii \land
        \jjj})^{\gamma_2(1+\eps)/(1-\eps)}}{\fii^t(A_{\iii \land
        \jjj})^{\gamma_1(1-\eps)/(1+\eps)}}
    \ge \omega' r^{\gamma_2(1+4\eps)-\gamma_1(1-2\eps)}.
  \end{equation*}
  If $|\iii\land\jjj|<n_0$ then, using Lemma
  \ref{thm:angle}\ref{atleast} again,
  \[
  \varangle(R_\iii,R_\jjj) \ge \omega'  \alpha_2(A_{\iii \land
    \jjj})/\alpha_1(A_{\iii \land \jjj}) \ge \omega'
  \lalpha^{n_0}/\ualpha^{n_0}.
  \]
  Thus, in either case, if $\iii,\jjj\in Z_K(r)$, $\iii\neq\jjj$,
  then
  \begin{equation}   \label{eq:anglelowerbound}
  \varangle(R_\iii,R_\jjj) \ge \omega r^{\gamma_2(1+4\eps)-\gamma_1(1-2\eps)},
  \end{equation}
  where $\omega = \omega' \lalpha^{n_0}/\ualpha^{n_0} < 1$.

  In order to apply Proposition \ref{thm:kakeya_estimate}, all the
  rectangles must have the same size. Let
  \[
  \alpha_1' = r^{\gamma_1(1-2\eps)}
  \frac{r^{\gamma_2(1-2\eps)}}{\omega r^{\gamma_2(1+4\eps)}} \ge
  r^{\gamma_1(1-2\eps)}
  \]
  and let also $\alpha_2' = r^{\gamma_2(1-2\eps)}$ (bear in mind that
  both $\alpha_1'$ and $\alpha_2'$ depend on $r$).
  It follows from \eqref{eq:alpha_approx2} that each rectangle
  $R_\iii$, with $\iii\in Z_K(r)$, is
  contained in a rectangle $R_\iii'$ of size $\alpha_1' \times
  \alpha_2'$ with long side still parallel to $A_\iii\bigl(
  \theta_1(A_\iii) \bigr)$. Moreover, by \eqref{eq:anglelowerbound}, the
  angle between any two such rectangles is at least
  $\alpha_2'/\alpha_1'$.

  Let $\delta =  T r^{\gamma_2 (1-2\eps)}$. We write
  $E(\delta)$ for the $\delta$-neighborhood of $E$. Using
  \eqref{eq:alpha_approx2} once again, notice that, whenever $\iii\in
  Z_K(r)$, $E(\delta)$
  contains a $T\alpha_2(A_\iii)$-neighborhood of $E_\iii\subset E$.
  Hence $E(\delta)$ intersects each rectangle $R_\iii$, and
  therefore also each rectangle $R_\iii'$, in a set of
  $\LL^2$-measure at least
  \[
  \roo\alpha_1(A_\iii)\alpha_2(A_\iii) \ge \roo\xi
  r^{\gamma_1(1+4\eps)}\xi
  r^{\gamma_2(1+4\eps)} =  \tau
  \alpha_1'\alpha_2',
  \]
  where
  \begin{equation*}
    \tau = \roo\xi^2 r^{(\gamma_1+\gamma_2){(1+4\eps)}}
    \frac{\omega r^{\gamma_2(1+4\eps)}}{r^{\gamma_1(1-2\eps)}
      r^{\gamma_2(1-2\eps)}r^{\gamma_2(1-2\eps)}}
    = \roo \xi^2 \omega r^{6\eps(\gamma_1+2\gamma_2)}.
  \end{equation*}

  We can now apply Proposition \ref{thm:kakeya_estimate} to the
  set $E(\delta)$ and the family $\{ R_\iii' : \iii \in Z_K(r)
  \}$ to obtain, for every $r>0$ small enough, that
  \begin{align*}
    \LL^2\bigl(E(\delta)\bigr) &\ge \frac{\# Z_K(r)\tau
      (\tau\alpha_1'\alpha_2')}
         {2\sqrt{2}\log(2\pi\alpha_1'/\alpha_2')} \\
    &\ge  \frac{\left(\tfrac12 c^{-1}r^{-1}\right)\left(\roo \xi^2
        \omega r^{6\eps(\gamma_1+2\gamma_2)}\right)
      \left(\roo\xi^2r^{(\gamma_1+\gamma_2)(1+4\eps)}\right)}
         {2\sqrt{2}\log(2\pi\omega^{-1} r^{\gamma_1(1-2\eps)-\gamma_2(1+4\eps)})} \\
    &= \frac{(4\sqrt{2}c)^{-1}\roo^2\xi^4 \omega
         r^{\gamma_1+\gamma_2-1+\eps(10\gamma_1+16\gamma_2)}}
         {\log(2\pi\omega^{-1} r^{\gamma_1-\gamma_2-\eps(2\gamma_1+4\gamma_2)}) },
  \end{align*}
  where in the second displayed line we used
  (\ref{eq:zetacardinality}). Recalling the definition of $\delta$, we
  estimate
  \begin{align*}
    \dimlm(E) &= \liminf_{\delta \downarrow 0} \biggl( 2 -
    \frac{\log\LL^2\bigl(E(\delta)\bigr)}{\log\delta} \biggr) \\
    &\ge 2 - \limsup_{r \downarrow 0} \biggl(
    \frac{\log\bigl((4\sqrt{2}c)^{-1}\roo^2\xi^4\omega
         r^{\gamma_1+\gamma_2-1+\eps(10\gamma_1+16\gamma_2)}\bigr)}
         {\log(T r^{\gamma_2 (1-2\eps)})} \\
    &\qquad\qquad\quad\;\;\, - \frac{\log\log(2\pi\omega^{-1}
      r^{\gamma_1-\gamma_2-\eps(2\gamma_1+4\gamma_2)}) }
         {\log(T r^{\gamma_2 (1-2\eps)})} \biggr) \\
    &= 2 - \frac{\gamma_1+\gamma_2-1+\eps(10\gamma_1+16\gamma_2)}
         {\gamma_2(1-2\eps)},
  \end{align*}
  provided that $\gamma_1-\gamma_2-\eps(2\gamma_1+4\gamma_2) < 0$. By our
  earlier remark that $\gamma_1 < \gamma_2$, this can be achieved by
  starting with a very small $\eps>0$. Since $\gamma_1-1 =
  (1-t)\gamma_2$, we conclude, by letting $\eps\downarrow 0$, that
  \[
  \dimlm(E) \ge 2 - \frac{\gamma_1+\gamma_2-1}{\gamma_2} = t,
  \]
  as desired.
\end{proof}

\section{On the projection condition} \label{sec:projection}

Of all the conditions in the definition of a self-affine set of Kakeya
type, the projection condition \eqref{K-projection} is the only
one which cannot be checked directly. In this section we prove
easily verifiable criteria which will be used to produce examples
where \eqref{K-projection} holds.

We introduce some notation. Given a set $F\subset\R^d$ and
$e\in\R^d$, we will denote
\[
F \cdot e = \{ x\cdot e: x\in F\}.
\]
The convex hull of $F$ will be denoted by $\conv(F)$. Recall that
a matrix $M\in\R^{\kappa\times\kappa}$ with nonnegative
coefficients is \emph{irreducible} if for all $1\le i,j\le\kappa$
there is $n>0$ such that $M^n_{ij}>0$. Finally, the identity
matrix on $\R^{2\times 2}$ will be denoted by $\id_2$.

We state two simple lemmas for later reference.

\begin{lemma} \label{th:intersectionisinterval}
  If $\{ \intI_\iii : \iii \in I^* \}$ is a collection of closed
  intervals such that for any $\iii \in I^*$
  \[
  \bigcup_{i \in I} \intI_{\iii i} = \intI_\iii,
  \]
  then
  \[
  \bigcap_{k=0}^\infty \bigcup_{\iii \in I^k} \intI_\iii =
  \intI_\varnothing.
  \]
\end{lemma}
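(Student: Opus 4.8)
The plan is to show that the union at every level of the tree is in fact constant and equal to $\intI_\varnothing$, so that the intersection collapses trivially. For $k \ge 0$ set $U_k = \bigcup_{\iii \in I^k} \intI_\iii$. Since $I^0 = \{\varnothing\}$ we have $U_0 = \intI_\varnothing$, and the assertion of the lemma is precisely that $\bigcap_{k=0}^\infty U_k = U_0$.

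The key step is to verify that $U_{k+1} = U_k$ for every $k \ge 0$. To do this I would reindex the union defining $U_{k+1}$: every word of length $k+1$ is uniquely of the form $\iii i$ with $\iii \in I^k$ and $i \in I$, so
\[
U_{k+1} = \bigcup_{\iii \in I^k} \bigcup_{i \in I} \intI_{\iii i}.
\]
Now I apply the hypothesis $\bigcup_{i \in I} \intI_{\iii i} = \intI_\iii$ to each inner union (in particular, the case $k=0$ uses the hypothesis at $\iii = \varnothing$, namely $\bigcup_{i\in I}\intI_i = \intI_\varnothing$), obtaining $U_{k+1} = \bigcup_{\iii \in I^k} \intI_\iii = U_k$.

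By induction it then follows that $U_k = U_0 = \intI_\varnothing$ for all $k \ge 0$, and hence $\bigcap_{k=0}^\infty U_k = \intI_\varnothing$, which is the desired conclusion.

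I do not anticipate a genuine obstacle: the argument is purely set-theoretic, and the ``closed interval'' hypothesis plays no role in the identity $U_{k+1} = U_k$ (it is presumably relevant only to how the lemma is applied later in \S \ref{sec:projection}). The only point requiring minor care is the bookkeeping at the base level, namely invoking the refinement hypothesis at $\iii = \varnothing$ so that $U_1 = \intI_\varnothing$; once this is granted, the induction and the collapse of the intersection are immediate.
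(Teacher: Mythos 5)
Your argument is correct and is exactly the induction the paper has in mind (its proof reads only ``Immediate by induction''): the hypothesis forces $\bigcup_{\iii\in I^{k+1}}\intI_\iii=\bigcup_{\iii\in I^{k}}\intI_\iii$ for every $k$, so all the level-$k$ unions equal $\intI_\varnothing$ and the intersection collapses. Your remark that the closed-interval hypothesis is irrelevant to this identity is also accurate.
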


\begin{proof}
  Immediate by induction.
\end{proof}

\begin{lemma} \label{th:unionofintervals}
  Suppose $\intI_1,\ldots, \intI_\kappa$ are closed intervals.
  If the adjacency matrix $M \in \R^{\kappa \times \kappa}$ defined as
  \begin{equation*}
    M_{ij} =
    \begin{cases}
      1, &\text{if } \intI_i \cap \intI_j \ne \emptyset,\\
      0, &\text{otherwise,}
    \end{cases}
  \end{equation*}
  is irreducible, then $\bigcup_{i=1}^\kappa \intI_i$ is an interval.
\end{lemma}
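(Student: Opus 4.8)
The plan is to show that the union $U := \bigcup_{i=1}^\kappa \intI_i$ is convex, since a convex subset of $\R$ is precisely an interval. As each $\intI_i$ is closed and there are only finitely many, $U$ is closed; if it is moreover bounded, then once convexity is established it follows that $U = [\min U, \max U]$, while in general convexity already yields that $U$ is an interval.

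First I would reformulate irreducibility combinatorially. Since $\intI_i \cap \intI_j \neq \emptyset$ if and only if $\intI_j \cap \intI_i \neq \emptyset$, the matrix $M$ is symmetric, and because $M^n_{ij} = \sum_{k_1,\ldots,k_{n-1}} M_{ik_1} M_{k_1 k_2}\cdots M_{k_{n-1}j}$ is a sum of nonnegative terms, the condition $M^n_{ij} > 0$ for some $n$ holds exactly when there is a finite chain $i = k_0, k_1, \ldots, k_n = j$ with $\intI_{k_{l-1}} \cap \intI_{k_l} \neq \emptyset$ for every $l$. Thus irreducibility says that any two of the intervals are joined by such a chain of pairwise intersecting intervals.

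The heart of the argument is then a separation step. Suppose, for contradiction, that $U$ is not an interval; then there is a point $z$ with $\inf U < z < \sup U$ and $z \notin U$. I would split the index set into $L = \{ i : \intI_i \subset (-\infty, z) \}$ and $R = \{ i : \intI_i \subset (z, \infty) \}$. Since each $\intI_i$ is an interval not containing $z$, it lies entirely on one side of $z$, so $L$ and $R$ partition $\{1,\ldots,\kappa\}$; both are nonempty because the closed set $U$ contains a point below $z$ (namely $\min U$, as $\inf U < z$) and a point above $z$, and the intervals containing these points lie in $L$ and $R$ respectively. For $i \in L$ and $j \in R$ the intervals $\intI_i, \intI_j$ are disjoint, so $M_{ij} = M_{ji} = 0$; hence $M$ is block diagonal with respect to the partition $L \sqcup R$, and consequently $M^n_{ij} = 0$ for all $n$ whenever $i \in L$ and $j \in R$. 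This contradicts irreducibility, so no such $z$ exists, $U$ is convex, and the lemma follows.

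I do not expect a serious obstacle here; the only points requiring a little care are the passage from the algebraic irreducibility condition to the chain statement, and the observation that disjointness across the partition forces a block-diagonal structure that is preserved under taking powers of $M$. Equivalently, one may phrase the whole argument through the standard topological fact that a union of connected sets whose intersection graph is connected is itself connected, together with the fact that the connected subsets of $\R$ are exactly the intervals; I would, however, favor the elementary convexity argument above, as it is self-contained.
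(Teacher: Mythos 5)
Your proof is correct. Note that the paper itself gives no argument here (the proof is ``Left to the reader''), so there is nothing to compare against; your convexity/separation argument is the natural one, and the reduction of irreducibility to connectedness of the intersection graph, together with the block-diagonal observation, is exactly what makes it work. The only cosmetic point is that $\min U$ need not exist when $U$ is unbounded below, but all you actually use is that $\inf U < z$ forces a point of $U$ below $z$, so the argument stands as written.
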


\begin{proof}
  Left to the reader.
\end{proof}

The following proposition, which may be of independent interest,
provides a simple criterion to guarantee that all the projections
of a self-affine set are intervals. Even though our application
will be in $\R^2$, we state the result for affine IFS's on
$\R^\kappa$ since the proof is the same.

\begin{proposition} \label{thm:convexprojectioncondition}
  Suppose that for each $i\in I$ there are a contractive invertible
  matrix $A_i \in \R^{\kappa \times \kappa}$ with $||A_i|| \le \ualpha<1$ and a
  translation vector $a_i\in\R^\kappa$. Assume the adjacency matrix $M \in
  \R^{\kappa \times \kappa}$ defined as
  \begin{equation*}
    M_{ij} =
    \begin{cases}
      1, &\text{if } \conv(E_i) \cap \conv(E_j) \neq \emptyset,\\
      0, &\text{otherwise},
    \end{cases}
  \end{equation*}
  is irreducible. Then $E \cdot e = \conv(E) \cdot e$ for all $e \in
  \R^\kappa$ and, in particular, $E \cdot e$ is an interval or a single
  point.
\end{proposition}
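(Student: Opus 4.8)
The plan is to show the two inclusions $\conv(E)\cdot e \supseteq E\cdot e$ and $\conv(E)\cdot e \subseteq E\cdot e$; the first is trivial since $E \subseteq \conv(E)$, so the content is the reverse inclusion. Because $\{x\cdot e : x\in\conv(E)\} = \conv(E\cdot e)$ and $E\cdot e$ is a compact subset of $\R$, its convex hull is the closed interval $[\min(E\cdot e),\max(E\cdot e)]$. Thus it suffices to prove that $E\cdot e$ is itself an interval (equivalently, a connected set), since a compact connected subset of $\R$ coincides with its convex hull. Once $E\cdot e$ is shown to be an interval, the identity $E\cdot e = \conv(E)\cdot e$ follows at once, and the final ``interval or single point'' assertion is immediate.

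To prove that $E\cdot e$ is an interval, I would set up the two lemmas as a self-similar machine. For $\iii\in I^*$ define the closed interval
\[
  \intI_\iii = \conv(E_\iii)\cdot e = \{ x\cdot e : x\in\conv(E_\iii)\},
\]
which is indeed a closed bounded interval because $\conv(E_\iii)$ is compact and convex, and $x\mapsto x\cdot e$ is linear. The key algebraic observation is the self-affine relation $E_\iii = \bigcup_{i\in I}E_{\iii i}$, coming from the invariance \eqref{eq:invariance} applied at the level of cylinders; taking convex hulls and then projecting by $e$ gives $\bigcup_{i\in I}\intI_{\iii i}=\intI_\iii$ (the projection of a convex hull of a union equals the convex hull of the union of projections in one dimension). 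This is exactly the hypothesis of Lemma \ref{th:intersectionisinterval}, so that lemma yields
\[
  \bigcap_{k=0}^\infty \bigcup_{\iii\in I^k}\intI_\iii = \intI_\varnothing = \conv(E)\cdot e.
\]

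The remaining point is that each finite-level union $\bigcup_{\iii\in I^k}\intI_\iii$ is genuinely an interval, not merely a nested family whose intersection is an interval; this is what pins down $E\cdot e$ itself rather than only $\conv(E)\cdot e$. Here the irreducibility of $M$ enters. For the first level, the adjacency matrix of the intervals $\{\intI_i\}_{i\in I}$ dominates $M$: if $\conv(E_i)\cap\conv(E_j)\neq\emptyset$ then their projections $\intI_i,\intI_j$ meet, so irreducibility of $M$ forces the interval-adjacency matrix to be irreducible as well, and Lemma \ref{th:unionofintervals} shows $\bigcup_{i\in I}\intI_i$ is an interval. The main obstacle, and the step I would be most careful about, is promoting this to all levels $k$ and then to $E\cdot e$ in the limit: one must check that the intervals $\{\intI_{\iii i}\}_{i\in I}$ for fixed $\iii$ again have an irreducible adjacency structure, which follows because $A_\iii$ is an invertible affine map and hence $\conv(E_{\iii i})\cap\conv(E_{\iii j})\neq\emptyset$ iff $\conv(E_i)\cap\conv(E_j)\neq\emptyset$, preserving $M$. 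Combining the finite-level interval property with the fact that $E_\iii$ has diameter $\lesssim \ualpha^{|\iii|}\to 0$, the sets $\bigcup_{\iii\in I^k}\intI_\iii$ shrink onto $E\cdot e$ while each is an interval, forcing $E\cdot e$ to be an interval and completing the argument.
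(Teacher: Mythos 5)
Your overall architecture matches the paper's: approximate $E\cdot e$ by the nested unions $\bigcup_{\iii\in I^k}\conv(E_\iii)\cdot e$, feed the family $\intI_\iii=\conv(E_\iii)\cdot e$ into Lemma \ref{th:intersectionisinterval}, and use Lemma \ref{th:unionofintervals} via the irreducibility of $M$. But there is a genuine error at the central step. You assert that the self-affine relation $E_\iii=\bigcup_{i\in I}E_{\iii i}$, ``taking convex hulls and then projecting,'' yields $\bigcup_{i\in I}\intI_{\iii i}=\intI_\iii$. This is false as a piece of algebra: taking convex hulls does not commute with taking unions, so $\bigcup_i\conv(E_{\iii i})$ is in general a \emph{proper} subset of $\conv\bigl(\bigcup_i E_{\iii i}\bigr)=\conv(E_\iii)$, and the projected union $\bigcup_i\intI_{\iii i}$ can have gaps. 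Indeed, the equality $\bigcup_i\intI_{\iii i}=\intI_\iii$ is precisely the hypothesis of Lemma \ref{th:intersectionisinterval} and is exactly where the irreducibility of $M$ must enter; without it the conclusion of the proposition simply fails (two maps whose images have disjoint convex hulls produce a Cantor set in some projections). Your proposal then treats irreducibility as needed only for the secondary purpose of making the finite-level unions intervals, i.e.\ you invoke Lemma \ref{th:intersectionisinterval} on a hypothesis you never actually establish.

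The fix is the sandwiching argument the paper uses: for each $\iii$, the union $\intJ_\iii=\bigcup_i\intI_{\iii i}$ contains $E_\iii\cdot e$ and is contained in $\conv(E_\iii)\cdot e=\conv(E_\iii\cdot e)$; once you know $\intJ_\iii$ is an \emph{interval}, it must therefore contain $[\min E_\iii\cdot e,\max E_\iii\cdot e]$ and hence equal $\intI_\iii$. To see that $\intJ_\iii$ is an interval you can use your (correct, and in fact slightly cleaner than the paper's induction over $e$) observation that $\conv(E_{\iii i})=A_\iii\bigl(\conv(E_i)\bigr)+a_\iii$ with $A_\iii$ invertible, so the adjacency matrix of $\{\conv(E_{\iii i})\}_{i\in I}$ coincides with $M$, its projected version dominates $M$, and Lemma \ref{th:unionofintervals} applies at every node. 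With that repair the rest of your limiting argument goes through.
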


\begin{proof}
  We will repeatedly use the fact that the action of taking convex
  hulls commutes with affine maps. As a first instance of this,
  observe that for any $\iii \in I^*$,
  \begin{equation} \label{eq:affineconvex}
    A_\iii \bigl( \conv(E) \bigr) +  a_\iii = \conv(E_\iii),
  \end{equation}
  where
  \begin{equation} \label{eq:a_i}
    a_\iii = \sum_{n=1}^{|\iii|} A_{\iii|_{n-1}}a_{i_n}.
  \end{equation}
  Let $D$ denote the Hausdorff distance. Notice that
  \eqref{eq:affineconvex} implies
  \[
  D\bigl(\conv(E_\iii), E_\iii\bigr) \le \alpha_1(A_\iii)
  D\bigl(\conv(E), E\bigr).
  \]
  Therefore
  \[
  \lim_{k\rightarrow\infty} D\biggl(\bigcup_{\iii \in I^k}
  \conv(E_\iii), E\biggr) = 0,
  \]
  which in turn yields that
  \[
  E \cdot e = \bigcap_{k=1}^\infty \bigcup_{\iii \in I^k}
  \conv(E_\iii) \cdot e.
  \]
  Hence in order to prove the proposition it is enough to show that
  the family $\{ \conv(E_\iii) \cdot e:\iii\in I^*\}$
  verifies the hypothesis of Lemma \ref{th:intersectionisinterval}
  for all $e \in \R^\kappa$. We will do so by induction on
  $|\iii|$. Denote $\intI_\iii = \conv(E_\iii) \cdot
  e$ as $\iii \in I^*$, and note that $\intI_i \cap \intI_j \neq
  \emptyset$ whenever $\conv(E_i) \cap \conv(E_j) \neq
  \emptyset$. Since the matrix $M$ was assumed to be irreducible, the
  hypothesis of Lemma \ref{th:unionofintervals} is met, whence
  $\intJ_\varnothing := \bigcup_{i\in I} \intI_i$ is an interval, and
  thus equal to its convex hull. On the other hand, since
  \[
  E \cdot e \subset \intJ_\varnothing \subset \conv(E) \cdot e =
  \conv(E \cdot e),
  \]
  we have $\conv(\intJ_\varnothing)= \conv(E) \cdot e$. Hence
  $\intJ_\varnothing = \conv(E) \cdot e$, and this settles the case
  $|\iii|=0$. Now assume the case $|\iii|=k$ has been proven, and
  let $\iii$ be a symbol of length $k+1$. Write $\intJ_\iii = \bigcup_{i \in I}
  \conv(E_{\iii i}) \cdot e$ and $\iii = j\jjj$, where
  $j\in I$ and $|\jjj|=k$. Then
  \begin{align*}
    \intJ_\iii &= \bigcup_{i \in I} \bigl( A_j \bigl( \conv(E_{\jjj
      i}) \bigr) + a_j \bigr) \cdot e
    = A_j \biggl( \bigcup_{i \in I} \conv(E_{\jjj i}) \biggr) \cdot e + a_j
    \cdot e \\
    &= \biggl( \bigcup_{i \in I} \conv(E_{\jjj i}) \biggr) \cdot A_j^* e +
    a_j \cdot e.
  \end{align*}
  By the inductive hypothesis, this is an interval. On the other
  hand, $\intJ_\iii$ contains $E_\iii \cdot e$ and is contained in
  $\conv(E_\iii) \cdot e$, whence its convex hull must be
  $\conv(E_\iii) \cdot e$. This shows that $\intJ_\iii = \intI_\iii$,
  which is what we wanted to prove.
\end{proof}

Proposition \ref{thm:convexprojectioncondition} is useful because
one can check whether it holds by simply plotting the self-affine
set $E$, say using a computer program. It also yields a very
simple algebraic criterion which guarantees that all linear
projections are stably intervals, as the next corollary shows.
Given $x,y \in \R^2$, we will denote $[x,y] = \{ \lambda x +
(1-\lambda)y : 0 \le \lambda \le 1 \}$ and $(x,y) = [x,y]
\setminus \{ x,y \}$. Furthermore, if $i \in I$ then with the
notation $i^\infty$, we mean the symbol $(i,i,\ldots) \in
I^\infty$.

\begin{corollary} \label{thm:intersectionprojectioncondition}
  Suppose that for each $i\in I$ there are a contractive invertible
  matrix $A_i \in \R^{2 \times 2}$ with $||A_i|| \le \ualpha$ and a
  translation vector $a_i\in\R^2$. Denote by $E$ the invariant set
  of the affine IFS $\Phi = \{ A_i + a_i\}_{i \in I}$ and let
  \begin{equation} \label{eq:x_i}
  \begin{split}
    x_i &= \pi(i 1^\infty) = a_i + \sum_{n=0}^\infty A_i A_1^n a_1, \\
    y_i &= \pi(i \kappa^\infty) = a_i + \sum_{n=0}^\infty A_i
    A_\kappa^n a_\kappa,
  \end{split}
  \end{equation}
  as $i \in I$.
  If the adjacency matrix $M \in \R^{\kappa \times \kappa}$ defined as
  \begin{equation*}
    M_{ij} =
    \begin{cases}
      1, &\text{if $(x_i,y_i) \cap (x_j,y_j)$ is a single point}, \\
      0, &\text{otherwise},
    \end{cases}
  \end{equation*}
  is irreducible, then for each affine IFS $\Phi'$ sufficiently close
  to $\Phi$ there is a constant $\roo > 0$ such that $E' \cdot e$ is
  an interval having length at least $\roo$ for all $e \in \R^2$.
  Here $E'$ is the invariant set of $\Phi'$.
\end{corollary}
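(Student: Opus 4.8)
The plan is to reduce everything to Proposition~\ref{thm:convexprojectioncondition} by comparing the segment adjacency matrix $M$ with the convex-hull adjacency matrix appearing there, and then to obtain stability and the uniform length bound by an openness argument together with compactness.

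First I would record two elementary facts about the points in~\eqref{eq:x_i}. Since $i1^\infty,\,i\kappa^\infty\in[i]$, we have $x_i,y_i\in E_i\subset\conv(E_i)$, and convexity gives $[x_i,y_i]\subset\conv(E_i)$. Hence, if $(x_i,y_i)\cap(x_j,y_j)$ is a single point, then $[x_i,y_i]\cap[x_j,y_j]\neq\emptyset$ and therefore $\conv(E_i)\cap\conv(E_j)\neq\emptyset$. Writing $M^{\mathrm{c}}$ for the adjacency matrix of Proposition~\ref{thm:convexprojectioncondition}, this says that the graph of $M$ is a subgraph of that of $M^{\mathrm{c}}$. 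Both matrices are symmetric, so irreducibility means connectedness of the underlying graph; since adding edges preserves connectedness, irreducibility of $M$ forces that of $M^{\mathrm{c}}$, and Proposition~\ref{thm:convexprojectioncondition} yields $E\cdot e=\conv(E)\cdot e$ for all $e$.

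Next I would treat stability. For an IFS $\Phi'$ near $\Phi$ the associated points $x_i',y_i'$ are given by the same absolutely convergent series as in~\eqref{eq:x_i}, and the norms $||A_i'||$ stay below $1$, so $x_i',y_i'$ depend continuously on $\Phi'$. The crucial observation is that the relation ``$(x_i,y_i)\cap(x_j,y_j)$ is a single point'' is an open condition: two collinear open segments meet in either the empty set or a subsegment, never a single point, so a single intersection point must be interior to both segments and lie off a common line, i.e. the segments cross transversally; and a transversal interior crossing persists under small motions of the four endpoints. Thus there is a neighborhood $U$ of $\Phi$ on which $M'_{ij}=1$ whenever $M_{ij}=1$, so the segment adjacency matrix $M'$ of each $\Phi'\in U$ dominates $M$ and is again irreducible. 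Applying the first paragraph to $\Phi'$ gives $E'\cdot e=\conv(E')\cdot e$, an interval or a point, for every $e$.

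Finally I would extract the uniform length bound. As $M$ is irreducible and $\kappa\ge2$, fix a pair $i\neq j$ with $M_{ij}=1$. For $\Phi'\in U$ the segments $[x_i',y_i']$ and $[x_j',y_j']$ are nondegenerate and non-parallel, with continuously varying unit directions $d_1(\Phi'),d_2(\Phi')$ and positive lengths $L_1(\Phi'),L_2(\Phi')$. Since $E'\cdot e=\conv(E')\cdot e$ is an interval containing both $[x_i',y_i']\cdot e$ and $[x_j',y_j']\cdot e$, its length is at least
\[
f(\Phi',e):=\max\bigl(L_1(\Phi')\,|d_1(\Phi')\cdot e|,\ L_2(\Phi')\,|d_2(\Phi')\cdot e|\bigr).
\]
The function $f$ is continuous and strictly positive on $\overline{U}\times S^1$ for a small compact $\overline{U}$, because $f(\Phi',e)=0$ would force $e$ perpendicular to both $d_1(\Phi')$ and $d_2(\Phi')$, which is impossible in $\R^2$ for non-parallel directions. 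Compactness then yields $\roo:=\min_{\overline{U}\times S^1}f>0$, the desired bound; being positive, it also rules out the degenerate single-point case, so each $E'\cdot e$ is a genuine interval. The step I expect to require the most care is the openness of the single-intersection-point condition underlying the stability step, in particular excluding the collinear and merely-tangential configurations; the remaining ingredients are Proposition~\ref{thm:convexprojectioncondition} itself and a routine compactness argument.
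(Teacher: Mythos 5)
Your proof is correct and follows essentially the same route as the paper's: observe that a single-point intersection of the open segments forces the convex hulls $\conv(E_i)$, $\conv(E_j)$ to meet, so the adjacency matrix of Proposition~\ref{thm:convexprojectioncondition} dominates $M$ and inherits irreducibility; note that the single-point (transversal, interior) intersection condition is open so nearby systems $\Phi'$ satisfy the same hypotheses; and extract the length bound from the fact that $\conv(E')$ is genuinely two-dimensional. You merely spell out two details the paper leaves implicit (why the intersection condition is stable, and the compactness argument producing $\roo$, which in your version is even uniform over a neighborhood of $\Phi$), whereas the paper gets the lower bound from a ball contained in $\conv(E)$; these are the same idea.
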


\begin{proof}
  Denote by $M'$ the adjacency matrix corresponding to the
  system $\Phi'$. Since the property that $(x_i,y_i)$
  intersects $(x_j,y_j)$ in a single point is stable, we see
  that $M'_{ij} \ge M_{ij}$ if $\Phi'$ is
  sufficiently close to $\Phi$. In particular, $M'$ is
  irreducible whenever $M$ is. Thus it is enough to verify the result
  for the original system $\Phi$. It follows from the assumptions that
  $E$ is not contained in a line. Thus there exists $\roo>0$ such that
  $\conv(E)$ contains a ball of radius $\roo$. Since trivially
  $(x_i,y_i) \subset \conv(E_i)$, the proof is finished by Proposition
  \ref{thm:convexprojectioncondition}.
\end{proof}

We next present a different, but also stable and easily checkable,
condition that guarantees that the projection condition
\eqref{K-projection} is met. Let $\QQ_2$ denote the family of all
vectors $v \in \R^2$ with strictly positive coefficients and define
a partial order $\prec$ on $\R^2$ by setting $x \prec y$ if
and only if $y-x \in \QQ_2$. With the notation $x \preceq y$ we mean
that $x \prec y$ or $x=y$.

\begin{lemma} \label{thm:orderprojectioncondition}
  Suppose that for each $i\in I$ there are a contractive invertible
  matrix $A_i \in \R^{2 \times 2}$ with $||A_i|| \le \ualpha$ and a
  translation vector $a_i\in\R^2$. If $A_i$ has strictly positive
  coefficients for all $i\in I$ and the points $x_i$, $y_i$ defined
  in \eqref{eq:x_i} satisfy
  \begin{equation} \label{eq:chain}
    x_i \prec x_{i+1} \prec y_i \prec y_{i+1}
  \end{equation}
  whenever $i \in \{ 1,\ldots,\kappa-1 \}$, then there is a constant
  $\roo>0$ such that $E \cdot e$ contains an interval of length
  $(y_\kappa-x_1)\cdot e \ge \roo$ for all $e \in \QQ_2$.
\end{lemma}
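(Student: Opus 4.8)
The plan is to show directly that $E \cdot e \supseteq [x_1 \cdot e, y_\kappa \cdot e]$ for every $e \in \QQ_2$. Since the chain condition \eqref{eq:chain} gives $x_1 \prec y_\kappa$, this interval has length $(y_\kappa - x_1) \cdot e > 0$, and a uniform positive lower bound $\roo$ follows by compactness once $e$ is restricted to unit directions in the closed positive quadrant. The engine of the argument is the self-similar identity
\[
E \cdot e = \bigcup_{i \in I} \bigl( E \cdot A_i^* e + a_i \cdot e \bigr),
\]
together with the observation that positivity of the coefficients of each $A_i$ forces $A_i^* e \in \QQ_2$ whenever $e \in \QQ_2$. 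Thus the positive cone is preserved under all the adjoint maps, and the hypotheses remain available along every branch of the construction.

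First I would record the reformulations $x_i = (A_i + a_i)(x_1)$ and $y_i = (A_i + a_i)(y_\kappa)$, which come from $x_1 = \pi(1^\infty)$, $y_\kappa = \pi(\kappa^\infty)$ and the relation $\pi(i\iii) = (A_i+a_i)\pi(\iii)$. Writing $G(e) = [x_1 \cdot e, y_\kappa \cdot e]$ and $J_i(e) = [x_i \cdot e, y_i \cdot e]$ for $e \in \QQ_2$, a direct computation yields $J_i(e) = G(A_i^* e) + a_i \cdot e$. The one-step covering $\bigcup_{i \in I} J_i(e) = G(e)$ is exactly where \eqref{eq:chain} enters: dotting it with $e \in \QQ_2$ gives $x_i \cdot e < x_{i+1}\cdot e < y_i \cdot e < y_{i+1}\cdot e$, so consecutive intervals $J_i(e)$ overlap while their left and right endpoints both increase, forcing the union to be the single interval $[x_1 \cdot e, y_\kappa \cdot e]$.

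Next I would iterate. Setting $G_n(e) = \bigcup_{\iii \in I^n}\bigl(G(A_\iii^* e) + a_\iii \cdot e\bigr)$, with $a_\iii$ as in \eqref{eq:a_i}, the decomposition $a_{i\jjj} = a_i + A_i a_\jjj$ produces the recursion $G_{n+1}(e) = \bigcup_{i \in I}\bigl(G_n(A_i^* e) + a_i \cdot e\bigr)$. By induction on $n$, using the base case from the previous paragraph and the fact that each $A_i^* e$ again lies in $\QQ_2$, one obtains $G_n(e) = G(e)$ for all $n$. The crucial point is that the endpoints of each fine interval $G(A_\iii^* e) + a_\iii \cdot e$ are $\pi(\iii 1^\infty) \cdot e$ and $\pi(\iii \kappa^\infty) \cdot e$, both lying in $E \cdot e$, while its length is $(y_\kappa - x_1)\cdot A_\iii^* e \le |y_\kappa - x_1|\,\ualpha^{|\iii|}\,|e|$.

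To finish, fix $t \in G(e)$ and $n \in \N$. Since $G_n(e) = G(e)$, the number $t$ belongs to some fine interval $G(A_\iii^* e) + a_\iii \cdot e$ with $\iii \in I^n$, hence lies within distance $|y_\kappa - x_1|\,\ualpha^n\,|e|$ of the point $\pi(\iii 1^\infty)\cdot e \in E \cdot e$. Letting $n \to \infty$ and using that $E \cdot e$ is compact, and therefore closed, gives $t \in E \cdot e$; thus $E \cdot e \supseteq G(e)$, an interval of length $(y_\kappa - x_1)\cdot e$. I expect the main obstacle to be the bookkeeping in the iteration step, namely verifying $G_n(e) = G(e)$ for all $n$ and all $e \in \QQ_2$ simultaneously; this hinges entirely on the cone invariance $A_i^*(\QQ_2) \subset \QQ_2$, which keeps the chain condition in force along every branch. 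Once that is in place, the covering is a soft consequence of the overlaps in \eqref{eq:chain} and the closedness of $E \cdot e$.
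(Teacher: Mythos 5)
Your proof is correct and follows essentially the same route as the paper: the sets $G(A_\iii^* e) + a_\iii\cdot e$ are exactly the projections $\bigl(A_\iii([x_1,y_\kappa])+a_\iii\bigr)\cdot e$ used in the paper, the one-step covering $\bigcup_i J_i(e)=G(e)$ derived from \eqref{eq:chain} is the paper's base case, and the induction via $A_i^*(\QQ_2)\subset\QQ_2$ together with the shrinking-diameter limiting argument mirrors the paper's appeal to Lemma \ref{th:intersectionisinterval} and Hausdorff-distance convergence. No gaps; the explicit handling of the uniform lower bound $\roo$ by restricting to unit directions is a reasonable reading of the statement.
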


\begin{proof}
  The proof runs parallel to that of Proposition
  \ref{thm:convexprojectioncondition}. Given $\iii\in I^*$, write
  \[
  \ell_\iii = A_\iii([x_1,y_\kappa])+a_\iii,
  \]
  where $a_\iii$ is given by \eqref{eq:a_i}. We set
  $A_\varnothing = \id_2$ and $a_\varnothing=(0,0)$. Observe that
  \[
  D(\ell_\iii, E_\iii) \rightarrow 0 \textrm{ as }
  |\iii|\rightarrow\infty,
  \]
  whence
  \[
  \lim_{k\rightarrow\infty} D\biggl(\bigcup_{\iii \in I^k} \ell_\iii, E\biggr) = 0,
  \]
  which in turn yields that
  \[
  E \cdot e \supset \bigcap_{k=1}^\infty \bigcup_{\iii \in I^k}
  \ell_\iii \cdot e.
  \]
  Thus we only need to prove that the family $\{ \ell_\iii \cdot
  e:\iii\in I^*\}$ verifies the hypothesis of Lemma
  \ref{th:intersectionisinterval} for all $e\in\QQ_2$. Denoting
  $\intI_\iii = \ell_\iii \cdot e$ as $\iii \in I^*$, we
  will prove by induction on $|\iii|$ that
  \begin{equation} \label{eq:inductionunionints}
    \intI_\iii = \bigcup_{i\in I}
    \intI_{\iii i} = \bigl(A_\iii([x_1,y_\kappa]) + a_\iii\bigr) \cdot
    e,
  \end{equation}
  for all $e \in \QQ_2$. Consider the case $|\iii| = 0$ first. Note
  that, for $i \in I$,
  \[
  x_i = A_i x_1 + a_i, \quad y_i = A_i y_\kappa + a_i.
  \]
  Hence $\ell_i = [x_i,y_i]$. From \eqref{eq:chain} we get that $x_1
  \preceq x_i \prec y_i \preceq y_\kappa$ for $i\in I$, whence
  \begin{equation} \label{eq:unionints1}
    \bigcup_{i\in I} [x_i,y_i]\cdot e \subset [x_1,y_\kappa]\cdot e.
  \end{equation}
  On the other hand, from \eqref{eq:chain} we see that $x_i \prec
  y_{i+1}$ and $x_{i+1}\prec y_i$. Since $x\cdot e < y\cdot e$
  whenever $x\prec y$ and $e\in\QQ_2$, we get
  \begin{equation} \label{eq:unionints2}
    [x_i,y_i] \cdot e \cap [x_{i+1},y_{i+1}]\cdot e \neq \emptyset,
  \end{equation}
  whenever $i \in \{ 1,\ldots,\kappa-1 \}$. From \eqref{eq:unionints1} and
  \eqref{eq:unionints2}, and recalling that $\ell_i = [x_i,y_i]$, we
  get \eqref{eq:inductionunionints} in the case $|\iii|=0$. The inductive
  step follows the same pattern as in Proposition
  \ref{thm:convexprojectioncondition}; details are omitted.
\end{proof}

\section{Examples and remarks} \label{sec:examples}

We are now ready to state easily checkable conditions which
guarantee that an affine IFS is stably of Kakeya type. Explicit
examples follow below. In the following theorem, we will use the
convention that $[x,y]=[y,x]$ if $x>y$.

\begin{theorem} \label{thm:example}
  Suppose that for each $i \in I$ there are a contractive invertible
  matrix $A_i \in \R^{2 \times 2}$ with $||A_i|| \le \ualpha$ and a
  translation vector $a_i \in \R^2$. Assume further that for each $i
  \in I$ there are real numbers $u_i,v_i,w_i,z_i>0$ such that
  \[
  A_i = \left(%
    \begin{array}{cc}
      u_i & v_i \\
      w_i & z_i \\
    \end{array}%
  \right)
  \]
  and the following two conditions hold:
  \begin{enumerate}
  \renewcommand{\labelenumi}{(X\arabic{enumi})}
  \renewcommand{\theenumi}{X\arabic{enumi}}
  \item \label{X-separation}
    The intervals $[w_i/u_i,z_i/v_i]$ are pairwise disjoint for every
    $i \in I$.
  \item \label{X-projection}
    The affine IFS $\{A_i+a_i\}_{i \in J}$, where $J \subset I$ has
    cardinality at least $2$, verifies either the
    hypotheses of Corollary \ref{thm:intersectionprojectioncondition}
    or the hypotheses of Lemma \ref{thm:orderprojectioncondition}.
  \end{enumerate}
  Then the affine IFS $\Phi=\{ A_i+a_i\}_{i \in I}$ is stably of
  Kakeya type. In particular, the Minkowski dimension of the invariant
  set is given by the zero of the pressure formula
  \eqref{eq:pressure}, and is continuous on a neighborhood of $\Phi$.
\end{theorem}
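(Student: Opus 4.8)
The plan is to verify the two defining conditions of a self-affine set of Kakeya type, namely \eqref{K-kakeya} and \eqref{K-projection}, and to check that both are stable under small perturbations. The projection condition \eqref{K-projection} is immediate: hypothesis \eqref{X-projection} says precisely that the IFS (restricted to a subsystem $J$ of cardinality at least two) satisfies the hypotheses of either Corollary \ref{thm:intersectionprojectioncondition} or Lemma \ref{thm:orderprojectioncondition}, each of which yields a uniform lower bound $\roo>0$ on the length of every linear projection $E\cdot e$; since $E_J\subset E$, this transfers to $E$ and gives \eqref{K-projection}. Corollary \ref{thm:intersectionprojectioncondition} already builds in stability, and I would note that Lemma \ref{thm:orderprojectioncondition}'s hypotheses (strict positivity of the $A_i$ and the chain of strict $\prec$-inequalities \eqref{eq:chain}) are open conditions, so \eqref{K-projection} holds stably.

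The substantive work is to produce the cone $X(\theta,\beta)$ witnessing \eqref{K-kakeya} from the positivity assumption and the disjointness hypothesis \eqref{X-separation}. The natural choice is to let $X(\theta,\beta)$ be a cone slightly larger than the positive quadrant. Since each $A_i$ has strictly positive entries, $A_i$ maps the closed positive quadrant into the open positive quadrant, which is the model case of the Perron--Frobenius setup; I would make this quantitative to get \eqref{K-coneinvariance}, that is, $A_i(\overline{X(\theta,\beta)})\subset X(\theta,\beta)$ for a cone $X(\theta,\beta)$ about $\theta=\tfrac{1}{\sqrt2}(1,1)$ with $\beta<\pi/2$ chosen small enough. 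Because $A_i^*$ also has strictly positive entries, the same argument applied to the transposes gives \eqref{K-transpose}. The key point is to make the half-aperture $\beta$ uniform over $i\in I$ and robust under perturbation, which follows because strict positivity of the entries, being an open condition, persists for nearby matrices.

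The condition that will require the most care is the separation condition \eqref{K-separation}, and this is where hypothesis \eqref{X-separation} enters. The image cone $A_i(\overline{X(\theta,\beta)})$ is pinched, as $\beta\to 0$, around the single direction $A_i(\theta)$; more precisely its boundary directions are governed by how $A_i$ acts on the boundary rays of the quadrant. The two extreme rays $(1,0)$ and $(0,1)$ map under $A_i$ to $(u_i,w_i)$ and $(v_i,z_i)$, which have slopes $w_i/u_i$ and $z_i/v_i$. Thus the cone of slopes swept out by $A_i(\overline{X(\theta,\beta)})$ is, for $\beta$ small, an arbitrarily small neighborhood of the interval $[w_i/u_i,z_i/v_i]$ in slope-space. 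The pairwise disjointness asserted in \eqref{X-separation} means these slope-intervals are separated by a positive gap; hence, choosing $\beta$ small enough (uniformly, using compactness of $I$), the image cones $A_i(\overline{X(\theta,\beta)})$ have pairwise disjoint interiors and meet only at $0$, giving \eqref{K-separation}. The main obstacle is organizing this slope-interval computation carefully and tracking the two competing smallness requirements on $\beta$: it must be large enough that the single cone $X(\theta,\beta)$ strictly contains the positive quadrant (so that cone-invariance has room), yet small enough that the images stay disjoint. Since all the relevant quantities depend continuously on the matrix entries and \eqref{X-separation} is an open condition, the same $\beta$ works for all sufficiently nearby IFS's, establishing stability. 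Having verified that $\Phi$ is stably of Kakeya type, the dimension formula and the continuity statement follow directly from Theorem \ref{thm:main_result}.
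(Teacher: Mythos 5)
Your overall route is the same as the paper's: check \eqref{K-kakeya} directly from positivity and \eqref{X-separation}, get \eqref{K-projection} from \eqref{X-projection}, note that everything is open, and invoke Theorem \ref{thm:main_result}. However, two steps are flawed as written. The main one is your handling of the aperture $\beta$ in the separation argument. A cone $X(\theta,\beta)$ with $\beta<\pi/2$, as Definition \ref{def:kakeya} requires, cannot ``strictly contain the positive quadrant''; and for such $\beta$ the set of slopes swept out by $A_i\bigl(\overline{X(\theta,\beta)}\bigr)$ is a \emph{subinterval} of $[w_i/u_i,z_i/v_i]$ that shrinks to the single point $(w_i+z_i)/(u_i+v_i)$ as $\beta\downarrow 0$ --- not a small neighbourhood of the whole interval. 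Worse, your final prescription ``choose $\beta$ small enough'' works against \eqref{K-coneinvariance}: the image cone concentrates near the direction $A_i\theta$, which need not lie in a very thin cone about $\theta$, so cone invariance forces $\beta$ to be \emph{large} (close to, but less than, $\pi/2$). The tension you describe is illusory. Since $\overline{X(\theta,\beta)}$ is contained in the closed double quadrant for every $\beta\le\pi/2$, and the image of the closed quadrant under $A_i$ consists precisely of the multiples of $(1,s)$ with $s=(w_ix+z_iy)/(u_ix+v_iy)\in[w_i/u_i,z_i/v_i]$ (this is the paper's computation), hypothesis \eqref{X-separation} already yields \eqref{K-separation} for \emph{every} admissible $\beta$; one then only needs $\beta$ close enough to $\pi/2$ for \eqref{K-coneinvariance} and \eqref{K-transpose}.

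The second issue is your claim that Lemma \ref{thm:orderprojectioncondition} gives a uniform lower bound on ``every linear projection $E\cdot e$''. That lemma only controls $E\cdot e$ for $e\in\QQ_2$, whereas \eqref{K-projection} concerns the specific directions $\theta_1(A_\iii)$. You therefore need the additional observation, made explicitly in the paper, that $\theta_1(A_\iii)$ is the Perron eigenvector of the positive matrix $A_\iii^*A_\iii$ and hence has coordinates of constant sign, so that it (or its negative) lies in $\QQ_2$ and the lemma applies. With these two repairs your argument coincides with the paper's proof.
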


\begin{proof}
  Using Theorem \ref{thm:main_result}, we only need to show that
  \eqref{K-kakeya} and \eqref{K-projection} hold for any small
  perturbation of $\Phi$. Since both \eqref{X-separation} and
  \eqref{X-projection} are stable properties, it is in fact enough to
  check that $\Phi$ is of Kakeya type.

  Let $\theta=\frac{1}{\sqrt{2}}(1,1)$. Since the $A_i$ have
  strictly positive coefficients, both $A_i$ and $A_i^*$ map the
  cone $X(\theta,\pi/2)$ into $X(\theta,\beta')$ for some
  $\beta'<\pi/2$. Hence there exists $\beta<\pi/2$ such that both
  \eqref{K-coneinvariance} and \eqref{K-transpose} hold.

  Suppose that \eqref{K-separation} does not hold for $\Phi$. Then
  there is $s>0$ and $i,j \in I$ such that $i \ne j$ and
  \[
  (1,s) = A_i(x,y) = A_j(x',y'),
  \]
  for some $x,y,x',y'>0$. Some simple algebra shows that
  \[
  s = \frac{w_i x+z_i y}{u_i x + v_i y} = \frac{w_j x'+z_j y'}{u_j
    x' + v_j y'},
  \]
  whence $s\in [w_i/u_i,z_i/v_i]\cap [w_j/u_j, z_j/v_j]$, which
  contradicts \eqref{X-separation}.

  Let $F$ be the invariant set of $\Psi=\{ A_i
  +a_i\}_{i \in J}$. It is clear that $F \subset E$. If $\Psi$
  verifies the conditions of Corollary
  \ref{thm:intersectionprojectioncondition}, then \eqref{K-projection}
  is immediately satisfied for $\Psi$ and hence also for $\Phi$.
  Likewise, if $\Psi$ satisfies the hypotheses of Lemma
  \ref{thm:orderprojectioncondition}, then \eqref{K-projection} holds
  for $\Phi$. This is true since $\theta_1(A_i)$ has positive
  coordinates thanks to the Perron-Frobenius Theorem.
  The proof is complete.
\end{proof}

We remark that finding an explicit neighborhood to which Theorem
\ref{thm:example} applies is an elementary, if tedious, exercise.

\begin{example} \label{ex:intersection}
  We consider our first specific example. Let
  \[
  A_1(r,\eps) = \left(%
    \begin{array}{cc}
      r & r+\eps \\
      \eps & r \\
    \end{array}%
  \right), \quad
  A_2(r,\eps) = \left(%
    \begin{array}{cc}
      r & \eps \\
      r+\eps & r \\
    \end{array}%
  \right).
  \]

  \begin{figure}
    \centering
    \includegraphics[width=0.6\textwidth]{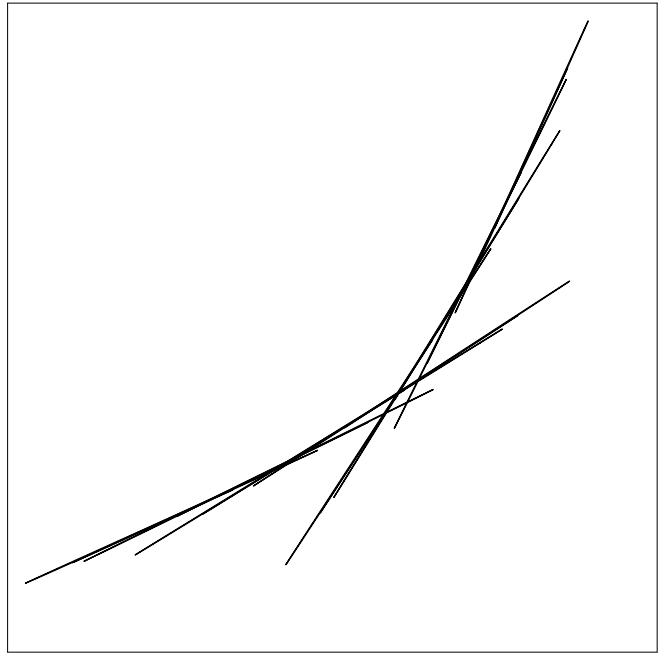}
    \caption{A self-affine set of Kakeya type.} \label{fig-sa}
  \end{figure}

  The affine IFS $\{A_1(r,0)+a_1, A_2(r,0)+a_2\}$ was studied in
  \cite{Edgar1992}, where it is proven that the singularity
  dimension is $1$ when $r=1/3$. This IFS does not verify
  \eqref{K-coneinvariance}; however, $\{A_1(r,\eps)+a_1,
  A_1(r,\eps)+a_2\}$ does satisfy \eqref{X-separation}, and hence
  \eqref{K-kakeya}, for all small $\eps>0$.

  Figure \ref{fig-sa} depicts the invariant set when $r=0.4$,
  $\varepsilon = 0.1$ and the translations are $a_1 = (-0.3, -0.3)$
  and $a_2=-a_1$. For these values of the parameters the spectral
  radius of the matrices $A_i(r,\eps)$ is approximately $0.624>1/2$; thus
  Falconer's Theorem does not apply. However, the conditions of
  Corollary \ref{thm:intersectionprojectioncondition} are clearly
  met (this can be verified algebraically without effort). Thus, by
  Theorem \ref{thm:example}, this is stably a self-affine set of
  Kakeya-type. We remark that by picking appropriate values of $r$
  and $\eps$ one can obtain examples where the norms of the maps are
  arbitrarily close to $1$.

  Notice that the invariant set resembles a union of approximately
  equally long segments pointing in different directions,
  underlining the Kakeya-type structure. Also observe that this
  particular example appears to be overlapping, although proving
  this rigorously looks very difficult.
\end{example}

\begin{lemma} \label{thm:exampleorder}
  Suppose that for each $i \in \{1,2\}$ there is a contractive invertible
  matrix $A_i\in\R^{2\times 2}$ with strictly positive coefficients and $||A_i||
  \le \ualpha$, such that the condition \eqref{X-separation} is
  satisfied. Let
  \begin{equation*}
    B_2 = \sum_{n=0}^\infty A_2^n = (\id_2-A_2)^{-1}.
  \end{equation*}
  If both $A_1 B_2 - \id_2$ and $(\id_2-A_1)B_2$
  have strictly positive coefficients, then for any
  vector $a_2$ with strictly positive coefficients, the affine IFS $\{ A_1,
  A_2+a_2\}$ is stably of Kakeya type.
\end{lemma}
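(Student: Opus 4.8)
The plan is to verify that the hypotheses of Lemma~\ref{thm:exampleorder} imply those of Theorem~\ref{thm:example}, so that the latter delivers the conclusion that $\{A_1, A_2+a_2\}$ is stably of Kakeya type. Theorem~\ref{thm:example} requires two things: condition~\eqref{X-separation}, which we are given directly, and condition~\eqref{X-projection}, which asks that the subsystem (here the whole system, taking $J=I=\{1,2\}$) satisfy the hypotheses of Corollary~\ref{thm:intersectionprojectioncondition} or of Lemma~\ref{thm:orderprojectioncondition}. My strategy is to aim for Lemma~\ref{thm:orderprojectioncondition}, since that lemma is tailored to positive matrices and its chain condition~\eqref{eq:chain} should translate cleanly into the positivity of the two matrix expressions $A_1 B_2 - \id_2$ and $(\id_2 - A_1)B_2$.

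First I would unwind the definitions of the four points. With $a_1 = (0,0)$ (since the first map is $A_1$, not $A_1+a_1$), formula~\eqref{eq:x_i} gives $x_1 = \pi(1 1^\infty) = \sum_{n=0}^\infty A_1 A_1^n a_1 = 0$ and $y_1 = \pi(1 2^\infty) = A_1\sum_{n=0}^\infty A_2^n a_2 = A_1 B_2 a_2$, while $x_2 = \pi(2 1^\infty) = a_2 + \sum_{n=0}^\infty A_2 A_1^n a_1 = a_2$ and $y_2 = \pi(2 2^\infty) = a_2 + \sum_{n=0}^\infty A_2 A_2^n a_2 = a_2 + A_2 B_2 a_2 = B_2 a_2$, using $\id_2 + A_2 B_2 = \id_2 + A_2(\id_2-A_2)^{-1} = (\id_2-A_2)^{-1} = B_2$. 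The chain condition~\eqref{eq:chain} for $\kappa=2$ reduces to the single requirement $x_1 \prec x_2 \prec y_1 \prec y_2$, i.e. three $\prec$-inequalities among $0$, $a_2$, $A_1 B_2 a_2$, and $B_2 a_2$.

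Next I would check each inequality against the hypotheses. Since $A_2$ has strictly positive coefficients and norm less than $1$, $B_2 = \sum A_2^n$ has strictly positive coefficients, so for any $a_2 \succ 0$ we get $B_2 a_2 \succ 0$; combined with positivity of $A_1$ this gives $A_1 B_2 a_2 \succ 0$ as well. The inequality $x_1 \prec x_2$ is just $0 \prec a_2$, immediate. The inequality $x_2 \prec y_1$, namely $a_2 \prec A_1 B_2 a_2$, is exactly $(A_1 B_2 - \id_2)a_2 \succ 0$, which holds for every $a_2 \succ 0$ precisely because $A_1 B_2 - \id_2$ has strictly positive coefficients. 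The inequality $y_1 \prec y_2$, namely $A_1 B_2 a_2 \prec B_2 a_2$, is $(\id_2 - A_1)B_2 a_2 \succ 0$, which holds for every $a_2 \succ 0$ because $(\id_2-A_1)B_2$ has strictly positive coefficients. Thus all three required inequalities hold, so Lemma~\ref{thm:orderprojectioncondition} applies and condition~\eqref{X-projection} is verified; Theorem~\ref{thm:example} then finishes the proof.

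**The main obstacle** I anticipate is purely bookkeeping: getting the two closed-form sums $B_2 a_2$ and $A_1 B_2 a_2$ right and matching the three $\prec$-inequalities to the two positivity hypotheses without sign errors, especially checking that the convention $a_1 = 0$ is being used consistently (the system is written $\{A_1, A_2+a_2\}$, so the first map has no translation). There is no genuine analytic difficulty, since positivity of a matrix $M$ with all coefficients positive immediately yields $Mv \succ 0$ for all $v \succ 0$; the only subtlety is that we need the inequalities to hold for the \emph{given} $a_2$, which is guaranteed because the hypotheses are stated as positivity of the matrices themselves (hence valid for every positive input vector) rather than positivity of particular vectors.
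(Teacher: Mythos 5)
Your proposal is correct and follows essentially the same route as the paper: compute $x_1=0$, $y_1=A_1B_2a_2$, $x_2=a_2$, $y_2=B_2a_2$, match the chain condition \eqref{eq:chain} to the positivity of $A_1B_2-\id_2$ and $(\id_2-A_1)B_2$, and then invoke Lemma \ref{thm:orderprojectioncondition} and Theorem \ref{thm:example}. The only difference is that you spell out the telescoping identity $\id_2+A_2B_2=B_2$ and the bookkeeping for $a_1=0$ explicitly, which the paper leaves implicit.
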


\begin{proof}
  Notice that $B_2$ has strictly positive coefficients. The points
  defined in \eqref{eq:x_i} are now
  $x_1 = 0$, $y_1 = A_1 B_2 a_2$, $x_2 = a_2$, and $y_2 = B_2 a_2$. Suppose
  $a_2\in\QQ_2$. It is clear that $x_1\prec x_2$. Moreover, $x_2
  \prec y_1$ whenever $A_1 B_2 - \id_2$ has strictly positive coefficients,
  and $y_1 \prec y_2$ whenever $(\id_2-A_1)B_2$ has strictly positive
  coefficients. Thus we have shown that the hypotheses of Lemma
  \ref{thm:orderprojectioncondition} hold, whence the lemma is
  immediate from Theorem \ref{thm:example}.
\end{proof}

\begin{example} \label{ex:concreteexample}
  As a concrete example, let
  \begin{equation} \label{eq:concreteexample}
    A_1 = \left(%
      \begin{array}{cc}
        0.35 & 0.40 \\
        0.30 & 0.35 \\
      \end{array}%
    \right), \quad
    A_2 = \left(%
      \begin{array}{cc}
        0.40 & 0.45 \\
        0.45 & 0.50 \\
      \end{array}%
    \right) .
  \end{equation}
  A straightforward calculation shows that $A_1 B_2 -\id_2$ and
  $(\id_2-A_1)B_2$ have positive coefficients. Hence, by Lemma
  \ref{thm:exampleorder}, the affine IFS $\{ A_1, A_2 + a_2\}$, as well as
  any small perturbation, is of Kakeya type for any $a_2\in\QQ_2$.
  In particular, the Minkowski dimension of the invariant set of
  this IFS is constant for all $a_2\in\QQ_2$.
\end{example}

\begin{example}
  As a final example, we exhibit an affine IFS of Kakeya type with
  an arbitrary number of maps. Choose $\kappa \ge 3$ and let $A_1$,
  $A_2$ be as in Example \ref{ex:concreteexample}. For $j \in \{
  3,\ldots,\kappa \}$, we define
  \[
  A_j = \left(%
    \begin{array}{cc}
      \tfrac{1}{2} & \tfrac{1}{2} \\
      \tfrac{1}{3j-1} & \tfrac{1}{3j} \\
    \end{array}%
  \right).
  \]
  Note that $\{ A_1,\ldots, A_\kappa\}$ satisfies
  \eqref{X-separation}. Thus Theorem \ref{thm:example}, applied with
  $J=\{1,2\}$, implies that for any $a_2\in\QQ_2$ and any
  $a_3,\ldots,a_\kappa\in\R^2$, the affine IFS
  \[
  \{ A_1, A_2+a_2, A_3+a_3,\ldots, A_\kappa+a_\kappa\}
  \]
  is stably of Kakeya type.
\end{example}

We finish the paper with some questions and remarks.

\begin{remark} \label{remarks}
  (1) Our
  techniques do not extend easily to higher dimensions. One source
  of technical difficulties is having to deal with more than two
  singular values, but the main obstruction is of course that the
  Kakeya conjecture is open for dimension $d\ge 3$, and no analogue
  of Proposition \ref{thm:kakeya_estimate} is known. We remark,
  however, that Lemma \ref{thm:norm_alpha} does hold, with the same
  proof, in higher dimensions, although one needs to replace the
  cone $X(\theta,\beta)$ by a cone which is, after a change of
  coordinates, $\QQ_d \cup -\QQ_d$. Here $\QQ_d$ is the family of all
  vectors $v \in \R^d$ with strictly positive coefficients. Note that
  in $\mathbb{R}^2$ both classes of cones agree, but not in higher
  dimensions. This observation will be useful in the appendix.

  (2) We do not know if
  our results hold for nonlinear perturbations of the affine IFS's
  we study. In studying nonlinear, nonconformal IFS's  one usually
  needs to assume the so-called ``1-bunching'' condition, which
  guarantees that certain kind of bounded distortion holds, and
  therefore allows control of the shape of the cylinder sets; see
  for example \cite{Falconer1994}. For a linear map $A$, 1-bunching
  is equivalent to $\alpha_2(A) > \bigl(\alpha_1(A)\bigr)^2$. This is exactly
  Hypothesis 2 in \cite{HueterLalley1995} and, as remarked in \S
  \ref{sec:kakeya}, it cannot hold in our setting. More
  specifically, $1$-bunching appears to be necessary to extend Lemma
  \ref{thm:angle} to nonlinear maps.

  (3) Computing the singularity dimension of an arbitrary affine
  IFS is a very difficult problem. Recently Falconer and Miao
  \cite{FalconerMiao2007b} succeeded in finding a closed formula in
  the case all the matrices are upper triangular but, as they
  indicate, in general it is very hard to even obtain good numerical
  estimates. In our setting, one could use Lemma
  \ref{thm:norm_alpha} to obtain rigorous upper and lower bounds,
  but the convergence is extremely slow.

  (4) It would be of interest to find more general conditions for the
  validity of \eqref{K-projection}. In particular, is it true that,
  when $\kappa=2$, \eqref{K-projection} holds whenever the
  singularity dimension is strictly larger than $1$?

  (5) Falconer's Theorem shows that the equality of Hausdorff dimension and
  singular value dimension of a self-affine set is typical from the
  point of view of measure, at least when the norms of the linear
  maps do not exceed $\frac{1}{2}$, but does not say anything about
  the topological structure of the exceptional set. In every known
  counterexample, the linear parts of the affine maps commute; this
  is of course a nowhere dense condition. Our results provide some
  support to the conjecture that Minkowski dimension and singular value
  dimension agree for an open and dense family of affine IFS's.
\end{remark}

\begin{ack}
  PS wishes to thank Nuno Luzia and Boris Solomyak for helpful conversations and
  comments.
\end{ack}

\appendix

\section{Tractable self-affine sets} \label{sec:tractable}

It was recently proved in \cite{KaenmakiVilppolainen2006} that the
positivity of the Hausdorff measure is equivalent to a specific
separation condition in a setting going beyond the conformal case.
Working on $\R^d$, we define a nontrivial class of affine IFS's having
this property.
With the notation $\HH^t$, we mean the $t$-dimensional Hausdorff
measure, see \cite[\S 4]{Mattila1995}, and $\QQ_d$ is the family of all
vectors $v \in \R^d$ with strictly positive coefficients..

\begin{definition}
  If for each $i \in I$ there are a contractive invertible matrix $A_i
  \in \R^{d \times d}$ with $||A_i|| \le \ualpha<1$ and a translation
  vector $a_i \in \R^d$ then the collection of affine mappings $\{ A_i
  + a_i : i \in I \}$ is called a \emph{tractable affine iterated
    function system} and the invariant set $E \subset \R^d$ of this
  affine IFS a \emph{tractable self-affine set}
  provided that the condition \eqref{K-coneinvariance}
  is satisfied for the cone $\QQ_d \cup -\QQ_d$ and
      the set $E$ is not contained in any hyperplane of $\R^d$.
\end{definition}

We remark that we do not assume the separation condition
\eqref{K-separation}. To motivate the use of the cone $\QQ_d \cup
-\QQ_d$, recall the explanation in Remark \ref{remarks}(1). The
condition on hyperplanes is simply a non-degeneracy assumption.

We shall show that on a tractable self-affine set the diameter of
a cylinder is comparable to the corresponding largest singular
value. For the proof, we need the following linear algebraic
lemma.

\begin{lemma} \label{thm:enterscone}
  Suppose there is a matrix $A \in \R^{d \times d}$ such that
  \[
  A\bigl(\overline{\QQ_d \cup -\QQ_d}\bigr) \subset \QQ_d \cup -\QQ_d.
  \]
  Then there exist a hyperplane $H$ such that for each
  $w \in \R^d \setminus H$ there is $n_0 \in \N$ with $A^n w \in
  \QQ_d \cup -\QQ_d$ whenever $n \ge n_0$.
\end{lemma}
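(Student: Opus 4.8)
The plan is to reduce the hypothesis to the classical Perron--Frobenius situation for a strictly positive matrix, and then read the conclusion off from the resulting spectral gap. First I would observe that $\overline{\QQ_d}\setminus\{0\}$ is connected, being a convex set with one point removed; hence its image under the continuous map $A$ is connected and, by hypothesis, contained in $\QQ_d \cup -\QQ_d$. Since $\QQ_d$ and $-\QQ_d$ are precisely the two connected components of that set, the image must lie entirely in one of them. Replacing $A$ by $-A$ if necessary is harmless here, because $(-A)^n w = (-1)^n A^n w$ and the target cone $\QQ_d \cup -\QQ_d$ is invariant under $x \mapsto -x$, so $A^n w$ and $(-A)^n w$ belong to it simultaneously. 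Thus I may assume $A(\overline{\QQ_d}\setminus\{0\}) \subset \QQ_d$. Applying this to each standard basis vector $e_j \in \overline{\QQ_d}$ shows that every column of $A$ lies in $\QQ_d$, so $A$ is a matrix with strictly positive entries.

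Next I would invoke the Perron--Frobenius Theorem for positive matrices. It provides a spectral radius $\lambda > 0$ which is a simple eigenvalue with a (right) eigenvector $v \in \QQ_d$ and a (left) eigenvector $u \in \QQ_d$, and, crucially, every other eigenvalue of $A$ has modulus strictly less than $\lambda$. I would take the hyperplane to be $H = \{ x \in \R^d : u \cdot x = 0 \}$. This $H$ is $A$-invariant, since $u \cdot (Ax) = (A^* u) \cdot x = \lambda\, u \cdot x$, and it is complementary to the line $\langle v \rangle$ because $u \cdot v > 0$ for positive vectors $u,v$. As $\lambda$ is simple, the spectrum of the restriction $A|_H$ consists of the remaining eigenvalues of $A$, so its spectral radius is strictly smaller than $\lambda$; fixing any $\lambda'$ strictly between these two values yields a constant $C$ with $\| A^n h \| \le C (\lambda')^n \| h \|$ for all $h \in H$ and $n \in \N$.

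Finally, for $w \notin H$ I would write $w = c v + h$ with $h \in H$ and $c = (u \cdot w)/(u \cdot v) \neq 0$, so that $A^n w = c \lambda^n v + A^n h$. Dividing by $\lambda^n$ gives $\lambda^{-n} A^n w = c v + \lambda^{-n} A^n h$, and the estimate above forces $\lambda^{-n} \| A^n h \| \to 0$, whence $\lambda^{-n} A^n w \to c v$. Since $c v \in \QQ_d \cup -\QQ_d$ and this cone is open, there is $n_0$ with $\lambda^{-n} A^n w \in \QQ_d \cup -\QQ_d$ for all $n \ge n_0$; multiplying by the positive scalar $\lambda^n$ preserves membership in the cone, giving $A^n w \in \QQ_d \cup -\QQ_d$ as required. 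The main obstacle, or rather the essential input, is the \emph{strict} dominance of $\lambda$ over the other eigenvalues: this is exactly the part of Perron--Frobenius for strictly positive matrices that is unavailable for merely nonnegative ones, and it is what makes the asymptotic direction of $A^n w$ converge to $\pm v$; the remaining sign bookkeeping and the asymptotic argument are routine.
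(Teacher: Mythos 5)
Your proof is correct and follows essentially the same route as the paper: Perron--Frobenius gives a dominant simple eigenvalue $\lambda_1$ with eigenvector $v$ in the cone, one takes $H$ to be the complementary $A$-invariant hyperplane, and the spectral gap forces $\lambda_1^{-n}A^n w \to c v$ for $w \notin H$. Your two refinements---the connectedness argument reducing to a strictly positive matrix, and defining $H$ as the orthogonal complement of the left Perron eigenvector rather than as the span of the remaining eigenvectors---are welcome, the latter in particular being the cleaner formulation when $A$ is not diagonalizable.
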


\begin{proof}
  Let $\{ \lambda_1,\ldots, \lambda_d\}$ be the spectrum of $A$,
  where $|\lambda_1|\ge \ldots\ge |\lambda_n|$. By the
  Perron-Frobenius Theorem, $\lambda_1$ is real and positive, and
  $\lambda_1 > |\lambda_2|$. Moreover, if $v$ is the Perron
  eigenvector associated to $\lambda_1$ then $v \in
  \QQ_d \cup -\QQ_d$. Let $H$ be the hyperplane spanned by all the other
  eigenvectors of $A$  (this is a well-defined hyperplane
  since $\lambda_1$ is a simple, real eigenvalue). Note that $H$ is
  invariant under $A$.

  Fix $w \in \R^d \setminus H$ and write $w = w_1 v
  + h$, where $w_1\neq 0$ and $h \in H$. We have
  \begin{equation} \label{eq:lineariterate}
    A^n w = w_1 \lambda_1^n v + (A|_H)^n(h) = \lambda_1^n \left(w_1 v
      + \lambda_1^{-n} (A|_H)^n(h)\right)
  \end{equation}
  for every $n \in \N$.
  Choose $|\lambda_2| < \mu < \lambda_1$ and $\delta$
  small enough so that $B(w_1 v,\delta) \subset
  \QQ_d \cup -\QQ_d$. Since the spectral radius of $A|_H$ is
  $|\lambda_2|$, we find $n_0 \in \N$ such that $|(A|_H)^n(h)| <
  \mu^n|h|$ and $(\mu/\lambda_1)^n |h| < \delta$ whenever $n \ge n_0$.
  Recalling (\ref{eq:lineariterate}), we conclude that $A^n w \in
  \QQ_d \cup -\QQ_d$ for $n \ge n_0$. The proof is finished.
\end{proof}

\begin{lemma} \label{thm:diamprojection}
  Suppose the collection of affine mappings $\{ A_i + a_i : i \in I
  \}$ is a tractable affine IFS. Then there exists a constant $C \ge
  1$ such that
  \begin{equation*}
    C^{-1}\alpha_1(A_\iii) \le \diam\bigl( E_\iii \bigr) \le
    C\alpha_1(A_\iii)
  \end{equation*}
  whenever $\iii \in I^*$.
\end{lemma}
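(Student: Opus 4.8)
The plan is to prove the two inequalities separately, in both cases reducing the question to the linear map $A_\iii$ alone. Recall from \S\ref{sec:self-affine} that $E_\iii = A_\iii(E) + (A_{\iii|_{|\iii|-1}}a_{i_{|\iii|}} + \cdots + a_{i_1})$, so $E_\iii$ is a translate of $A_\iii(E)$ and hence $\diam(E_\iii) = \diam\bigl(A_\iii(E)\bigr)$; the translation vectors therefore play no role. The upper bound is then immediate: for any $p,q \in E$ we have $|A_\iii p - A_\iii q| = |A_\iii(p-q)| \le ||A_\iii||\,|p-q| = \alpha_1(A_\iii)\,|p-q|$, since $||A_\iii|| = \alpha_1(A_\iii)$. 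Taking the supremum over $p,q$ gives $\diam\bigl(A_\iii(E)\bigr) \le \alpha_1(A_\iii)\diam(E)$, so any $C \ge \diam(E)$ handles this half.

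For the lower bound I would exploit the singular value decomposition of $A_\iii$. Writing a vector $w$ in the orthonormal basis $\theta_1(A_\iii),\ldots,\theta_d(A_\iii)$ of right singular vectors, the images $A_\iii\theta_k(A_\iii)$ are mutually orthogonal with $|A_\iii\theta_k(A_\iii)| = \alpha_k(A_\iii)$, whence
\begin{equation*}
  |A_\iii w|^2 = \sum_{k=1}^d \alpha_k(A_\iii)^2 \bigl( w\cdot\theta_k(A_\iii) \bigr)^2 \ge \alpha_1(A_\iii)^2 \bigl( w\cdot\theta_1(A_\iii) \bigr)^2 .
\end{equation*}
Applying this to difference vectors $w = p - q$ with $p,q \in E$ and taking the supremum over such $p,q$, I obtain
\[
  \diam\bigl(A_\iii(E)\bigr) \ge \alpha_1(A_\iii)\,\diam\bigl( \{ \theta_1(A_\iii)\cdot x : x \in E \} \bigr).
\]
Thus the lower bound reduces to a positive lower bound, uniform in $\iii$, on the length of the orthogonal projection of $E$ onto the top singular direction $\theta_1(A_\iii)$.

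The crux is therefore to show that $\roo := \inf_{u \in S^{d-1}} \diam\bigl( \{ u\cdot x : x \in E \} \bigr) > 0$, and this is exactly where the non-degeneracy hypothesis in the definition of a tractable self-affine set enters. Indeed, if the projection $\{ u\cdot x : x\in E \}$ were a single point for some $u \in S^{d-1}$, then $E$ would be contained in a hyperplane orthogonal to $u$, contradicting the assumption that $E$ lies in no hyperplane; hence $\diam\bigl( \{ u\cdot x : x\in E \} \bigr) > 0$ for every $u \in S^{d-1}$. Since $E$ is compact this quantity equals $\max_{x\in E} u\cdot x - \min_{x\in E} u\cdot x$, a continuous function of $u$, and a continuous strictly positive function on the compact sphere $S^{d-1}$ attains a positive minimum $\roo$. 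Combining the displays gives $\diam(E_\iii) \ge \roo\,\alpha_1(A_\iii)$ for all $\iii \in I^*$, and the lemma follows with $C = \max\{ \diam(E), \roo^{-1} \}$.

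I expect the only delicate point to be this uniform projection estimate; the remaining steps are routine linear algebra. I note that the argument above in fact uses only the non-degeneracy of $E$, and not the cone invariance \eqref{K-coneinvariance}. The linear-algebraic Lemma \ref{thm:enterscone} would become relevant to a more hands-on alternative, in which one fixes a single difference vector of $E$ and forces its iterates under the relevant matrices to enter the cone $\QQ_d \cup -\QQ_d$ so as to control the direction $\theta_1(A_\iii)$ directly; but the compactness argument sketched here already yields the claim in full, so I would favour it.
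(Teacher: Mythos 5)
Your proof is correct, but the lower bound is obtained by a genuinely different route from the paper's. The paper fixes $i\in I$, invokes Lemma \ref{thm:enterscone} to produce a hyperplane $H$, uses the non-degeneracy assumption to find $x,y\in E$ with $y-x\notin H$, pushes this pair forward by iterates of $A_i+a_i$ until the difference $y'-x'$ of the resulting points of $E$ lands in the invariant cone, and then applies the higher-dimensional version of Lemma \ref{thm:norm_alpha} (via Remark \ref{remarks}(1)) to get $|A_\iii y'-A_\iii x'|\ge\delta|y'-x'|\alpha_1(A_\iii)$. You instead bypass the cone machinery entirely: the singular value decomposition gives $\diam\bigl(A_\iii(E)\bigr)\ge\alpha_1(A_\iii)\diam\bigl(\{\theta_1(A_\iii)\cdot x:x\in E\}\bigr)$, and the uniform positivity of the right-hand factor follows from compactness of $S^{d-1}$ together with the assumption that $E$ lies in no (affine) hyperplane. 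Both arguments are sound. Yours is more elementary and strictly more general --- it shows the comparability $\diam(E_\iii)\asymp\alpha_1(A_\iii)$ for \emph{any} affine IFS whose attractor affinely spans $\R^d$, with no cone-invariance hypothesis at all, and your closing remark correctly identifies where Lemma \ref{thm:enterscone} would enter an alternative argument (it is precisely the paper's). What the paper's route buys is economy within the appendix: the cone-based estimate of Lemma \ref{thm:norm_alpha} is needed again in the proof of Theorem \ref{thm:schief} (for the supermultiplicativity $\alpha_1(A_{\iii\jjj})\ge\delta\alpha_1(A_\iii)\alpha_1(A_\jjj)$), so reusing it here costs nothing, whereas your compactness argument does not substitute for it there.
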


\begin{proof}
  The diameter of $E_\iii$ is at most a constant times
  $\alpha_1(A_\iii)$ in general, so we only need to prove the other
  direction. Fix $i \in I$ and
  let $H$ be the hyperplane given by Lemma \ref{thm:enterscone}
  applied to the matrix $A_i$. By the tractability,
  the self-affine set $E$ is not contained
  in any translate of $H$. Therefore, the arithmetic difference
  $E-E$ is not contained in $H$ and we can find
  two different points $x,y\in E$ such that $y-x\notin H$.
  Applying Lemma \ref{thm:enterscone}, we find $n$ such that
  $y'-x'\in X(\theta,\beta)$, where
  \[
  y' = (A_i+a_i)^n y \in E, \quad x' = (A_i+a_i)^n x \in E.
  \]
  By Remark \ref{remarks}(1) and Lemma \ref{thm:norm_alpha}, there
  exists a constant $\delta > 0$ such that
  \[
  \diam\bigl(E_\iii\bigr) \ge |A_\iii y'- A_\iii x'| \ge \delta
  |y'-x'| \alpha_1(A_\iii).
  \]
  The proof is complete.
\end{proof}

We introduce in the
following definition a natural separation condition to be used on
tractable self-affine sets. Given a tractable affine IFS, define for
$r>0$
\begin{equation*}
  Z(r) = \bigl\{ \iii \in I^* : \diam\bigl( E_\iii \bigr) \le r <
  \diam\bigl( E_{\iii^-}) \bigr) \bigr\}
\end{equation*}
and if in addition $x \in E$, set
\begin{equation*}
  Z(x,r) = \{ \iii \in Z(r) : E_\iii \cap B(x,r) \ne \emptyset \}.
\end{equation*}

\begin{definition} \label{def:ball_cond}
  We say that a tractable self-affine set $E$ satisfies a \emph{ball
    condition} if there exists a constant $0<\delta<1$ such that for
  each $x \in E$ there is $r_0>0$ such that for every $0<r<r_0$
  there exists a set $\{ x_\iii \in \conv\bigl(E_\iii\bigr) :
  \iii \in Z(x,r) \}$ such that the collection  $\{ B(x_\iii,\delta r)
  : \iii \in Z(x,r) \}$ is disjoint.
  If $r_0>0$ above can be chosen to be infinity for every $x \in E$
  then the tractable self-affine set $E$ is said to satisfy a
  \emph{uniform ball condition}. Here with the notation $\conv(A)$, we
  mean the convex hull of a given set $A$.
\end{definition}

Now we are ready to prove our result concerning tractable
self-affine sets.

\begin{theorem} \label{thm:schief}
  Suppose $E$ is a tractable self-affine set and $P(t)=0$ for some
  $0 < t \le 1$. Then $E$ satisfies the (uniform) ball condition
  if and only if $\HH^t(E)>0$.
\end{theorem}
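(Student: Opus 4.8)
The plan is to recognise that, in the range $0<t\le 1$, the tractable self-affine set fits the framework of \cite{KaenmakiVilppolainen2006}, and to deduce the equivalence from the theory developed there together with the density theorems for Hausdorff measure. The crucial simplification is that for $0<t\le 1$ one has $\fii^t(A_\iii)=\alpha_1(A_\iii)^t$, so by Lemma \ref{thm:diamprojection} the singular value function is comparable to $\diam(E_\iii)^t$: the fine affine structure drops out and only the diameter survives. First I would construct the natural measure. By Remark \ref{remarks}(1) the second assertion of Lemma \ref{thm:norm_alpha} is valid in $\R^d$ for the cone $\QQ_d\cup-\QQ_d$, so $\alpha_1(A_{\iii\jjj})\ge\cos^2(\beta)\alpha_1(A_\iii)\alpha_1(A_\jjj)$; raising to the power $t$ shows that the hypothesis of Theorem \ref{thm:semiconformal} holds with $D=\cos^{-2}(\beta)$. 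Since $P(t)=0$, the resulting measure $\mu$ satisfies $c^{-1}\fii^t(A_\iii)\le\mu([\iii])\le c\fii^t(A_\iii)$, and hence
\[
  C^{-1}\diam(E_\iii)^t\le\mu([\iii])\le C\diam(E_\iii)^t\qquad(\iii\in I^*).
\]
I write $\nu=\pi_*\mu$ for the associated natural probability measure on $E$.

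Next I would phrase both conditions in terms of $\nu$. Because $Z(r)$ is a covering antichain, every $\jjj$ with $\pi(\jjj)\in B(x,r)$ lies in some $[\iii]$ with $\iii\in Z(x,r)$, so $\pi^{-1}\bigl(B(x,r)\bigr)\subset\bigcup_{\iii\in Z(x,r)}[\iii]$, a disjoint union. Using the displayed comparability and $\diam(E_\iii)\le r$ this gives
\[
  \nu\bigl(B(x,r)\bigr)\le\sum_{\iii\in Z(x,r)}\mu([\iii])\le C\,\#Z(x,r)\,r^t .
\]
Conversely, every $\iii\in Z(r)$ has $\diam(E_\iii)$ comparable to $r$ (since $\alpha_1(A_\iii)\gtrsim\alpha_1(A_{\iii^-})$ by Lemma \ref{thm:norm_alpha}), so $\mu([\iii])\gtrsim r^t$ and a slightly enlarged ball bounds $\#Z(x,r)$ from above by a multiple of $\nu(B(x,Cr))/r^t$. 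Thus the ball condition, the uniform bound $\sup_{x,r}\#Z(x,r)<\infty$, and the measure bound $\nu(B(x,r))\le C r^t$ are three faces of the same statement, to be matched against $\HH^t(E)>0$.

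For the implication ball condition $\Rightarrow\HH^t(E)>0$ I would argue by a volume count followed by the density theorem. Given $x\in E$ and small $r$, each $\iii\in Z(x,r)$ satisfies $\conv(E_\iii)\subset B(x,2r)$, so the disjoint balls $B(x_\iii,\delta r)$ furnished by the ball condition all lie in $B(x,3r)$; comparing volumes bounds their number, whence $\#Z(x,r)\le(3/\delta)^d$. By the previous paragraph the upper density $\overline{D}^t(\nu,x):=\limsup_{r\downarrow 0}\nu(B(x,r))/(2r)^t$ is finite for every $x\in E$, and \cite[Theorem 6.9]{Mattila1995} then yields $\HH^t(E)\ge c\,\nu(E)>0$. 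Note that the $x$-dependence of the radius $r_0$ in the ball condition is harmless here, since only the behaviour as $r\downarrow 0$ enters the upper density.

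The reverse implication $\HH^t(E)>0\Rightarrow$ (uniform) ball condition I would prove by contraposition. If the ball condition fails, then $\#Z(x,r)$ is unbounded, and the self-affine structure --- every $E_\iii$ carries an affine copy of $E$, while the Gibbs property of $\mu$ makes the relative weights of subcylinders scale-invariant --- propagates this crowding to a sequence of scales tending to $0$ around $\nu$-almost every point, forcing $\overline{D}^t(\nu,\cdot)=\infty$ $\nu$-almost everywhere; the other half of \cite[Theorem 6.9]{Mattila1995} then gives $\HH^t(E)\le\lambda^{-1}\nu(E)$ for every $\lambda$, that is, $\HH^t(E)=0$. Since the uniform ball condition trivially implies the ball condition, this closes the loop. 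I expect the propagation step to be the main obstacle: the convex hulls $\conv(E_\iii)$ are thin and highly eccentric --- they contain a ball of radius only of order $\alpha_d(A_\iii)$, not $\diam(E_\iii)$ --- so the passage from an unbounded $\#Z(x,r)$ at a single scale to infinite density at almost every point cannot be carried out by a naive ball-packing argument, and is exactly the delicate point to be extracted from \cite{KaenmakiVilppolainen2006}.
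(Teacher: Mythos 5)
Your forward direction (ball condition $\Rightarrow\HH^t(E)>0$) is sound: the volume count bounding $\#Z(x,r)$ by $(3/\delta)^d$, the Gibbs measure from Theorem \ref{thm:semiconformal} (legitimate here because invariance of $\QQ_d\cup-\QQ_d$ forces $A_i$ or $-A_i$ to have strictly positive entries, so \eqref{K-transpose} comes for free and Lemma \ref{thm:norm_alpha} applies as indicated in Remark \ref{remarks}(1)), and the density theorem together give the claim. The reverse direction, however, has a genuine gap. You reduce $\HH^t(E)>0\Rightarrow$ ball condition to the assertion that unboundedness of $\#Z(x,r)$ at some point and scale ``propagates to a sequence of scales tending to $0$ around $\nu$-almost every point'', and you explicitly leave this propagation ``to be extracted from \cite{KaenmakiVilppolainen2006}''. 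That step is precisely the entire content of the implication you are trying to prove --- it is the main theorem of that reference, not a lemma one can gesture at --- so as written the hard half of the equivalence is unproved. (You also silently use that failure of the ball condition forces $\#Z(x,r)$ to be unbounded, i.e.\ that boundedness of $\#Z(x,r)$ lets one select $\delta r$-separated points, one in each thin convex hull $\conv(E_\iii)$; this is true but needs the lower bound $\diam(E_\iii)\ge cr$ for $\iii\in Z(r)$ and an explicit selection argument.) Finally, your chain of implications never returns to the \emph{uniform} ball condition: ``uniform BC $\Rightarrow$ BC $\Rightarrow\HH^t>0\Rightarrow$ BC'' does not close the loop for the uniform variant, and the remark that the uniform condition trivially implies the plain one points the wrong way.

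The paper's own proof takes a shorter and cleaner route: rather than re-deriving the equivalence, it verifies that for $0<t\le 1$ (where $\fii^t(A_\iii)=\alpha_1(A_\iii)^t\asymp\diam(E_\iii)^t$ by Lemma \ref{thm:diamprojection}, so the pressure here coincides with the one in \cite[(3.1)]{KaenmakiVilppolainen2006}) the collection $\{E_\iii\}_{\iii\in I^*}$ is a controlled Moran construction in the sense of \cite[\S 3]{KaenmakiVilppolainen2006}: geometric decay $\diam(E_\iii)\le\ualpha^{|\iii|}\diam(E)$, the two-sided quasi-multiplicativity $D^{-1}\le\diam(E_{\iii\jjj})/\bigl(\diam(E_\iii)\diam(E_\jjj)\bigr)\le D$ (from Lemmas \ref{thm:norm_alpha} and \ref{thm:diamprojection}), nestedness, and the Lipschitz bound $|A_\iii x-A_\iii y|\le C\diam(E_\iii)|x-y|$ on $E$. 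The full equivalence, including the uniform version, is then exactly \cite[Lemma 5.1 and Corollary 3.10]{KaenmakiVilppolainen2006}. If you want a self-contained argument you must actually carry out the localization step (using quasi-multiplicativity to transport a crowded configuration $Z(x,r)$ into every cylinder at comparable relative scale); otherwise the honest proof is the axiom verification the paper gives.
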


\begin{proof}
  Notice first that if $0 < t \le 1$ then by Lemma
  \ref{thm:diamprojection}, the topological pressure defined in
  \eqref{eq:pressure} is the same as the topological pressure defined
  in \cite[(3.1)]{KaenmakiVilppolainen2006}.

  Observe that $0 < \diam\bigl( E_\iii \bigr) \le
  \ualpha^{|\iii|} \diam(E)$ for each $\iii \in I^*$.
  According to Remark \ref{remarks}(1) and Lemma \ref{thm:norm_alpha},
  there is a constant $\delta > 0$ for which
  $\alpha_1(A_{\iii\jjj}) \ge \delta \alpha_1(A_\iii)
  \alpha_1(A_\jjj)$ whenever $\iii,\jjj \in I^*$. Hence, using Lemma
  \ref{thm:diamprojection} again, we find a constant $D \ge 1$ such
  that
  \begin{equation*}
    D^{-1} \le \frac{\diam\bigl( E_{\iii\jjj}\bigr)}{\diam\bigl(
    E_\iii \bigr) \diam\bigl(E_\jjj\bigr)} \le D
  \end{equation*}
  for every $\iii,\jjj \in I^*$.
  Since $E_{\iii i} \subset E_\iii$ as $\iii \in
  I^*$ and $i \in I$, we have shown that the collection of compact
  sets $\{ E_\iii : \iii \in I^* \}$ satisfies the assumptions
  (M1)--(M3) introduced in \cite[\S 3]{KaenmakiVilppolainen2006}.

  Using Lemma \ref{thm:diamprojection} once again, we see that there
  exists a constant $C \ge 1$ such that $|A_\iii x - A_\iii y| \le
  C\diam\bigl( E_\iii \bigr) |x-y|$ for every $x,y \in E$ and
  for each $\iii \in I^*$. Therefore, by \cite[Lemma
  5.1]{KaenmakiVilppolainen2006} and \cite[Corollary
  3.10]{KaenmakiVilppolainen2006}, the proof is finished.
\end{proof}





\end{document}